\numberwithin{equation}{section}
\newtheorem{theorem}{Theorem}[section]
\newtheorem{lemma}[theorem]{Lemma}
\newtheorem{proposition}[theorem]{Proposition}
\newtheorem{definition}[theorem]{Definition}
\newtheorem{remark}[theorem]{Remark}
\newtheorem{problem}[theorem]{Problem}
\newcommand{\R}{\mathbb{R}}
\newcommand{\N}{\mathbb{N}}
\newcommand{\Sp}{\mathbb{S}}
\begin{document}
\title{\textbf{The regularity of a semilinear elliptic system with quadratic growth of gradient}}

\author{
Weiyong He
\footnote{Department of Mathematics, University of Oregon, Eugene, OR 97403 (whe@uoregon.edu)} \qquad
\and
Ruiqi Jiang
\footnote{College of Mathematics and Econometrics, Hunan University, Changsha, 410082, P. R. China (jiangruiqi@hnu.edu.cn)}
}

\date{}%\today}
\maketitle

\begin{abstract}
In this paper, we study semilinear elliptic systems with critical nonlinearity of the form
\begin{equation}\label{sys01}
\Delta u=Q(x, u, \nabla u),
\end{equation}
for $u: \mathbb{R}^n\rightarrow \mathbb{R}^K$,  $Q$ has quadratic growth in $\nabla u$. Our work is motivated by elliptic systems for harmonic map and biharmonic map.
When $n=2$, such a system does not have smooth regularity in general for $W^{1, 2}$ weak solutions, by a well-known example of J. Frehse. Classical results of harmonic map, proved  by F. H\'elein (for $n=2$) and F. B\'ethuel (for $n\geq 3$), assert that a $W^{1, n}$ weak solution of harmonic map is always smooth.
We extend B\'ethuel's result to general system \eqref{sys01},  that a $W^{1, n}$ weak solution of the system  is smooth for $n\geq 3$. For a fourth order semilinear elliptic system with critical nonlinearity which extends biharmonic map, we prove a similar result, that a $W^{2, n/2}$ weak solution of such system is always smooth, for $n\geq 5$. We also construct various examples, and these examples show that our regularity results are optimal in various sense.
\end{abstract}

\textbf{Key Words:} harmonic and biharmonic maps; quadratic growth of gradient; \\
\hspace*{2.7cm} elliptic systems with critical nonlinearity; Lorentz space

\textbf{AMS subject classifications:} 35J47, 35J48, 35J91

\section{Introduction}
The theory of regularity of harmonic map is very influential in geometric analysis and partial differential equations.
In this paper we study two semilinear elliptic systems which are partly motivated by regularity theory of harmonic maps and biharmonic maps.

Firstly, let us consider the following elliptic system of second order.
Let $\Omega$ be an open domain in Euclidean space $\R^n$ and let $N$ be a compact Riemannian manifold embedding in some Euclidean space $\R^K$.
Consider a vector valued function $u: \Omega \subset \R^n \rightarrow N \subset \R^K$ satisfying the following equation
\begin{align}\label{eqn:main1}
\Delta u = Q(x,u,\nabla u),
\end{align}
where $Q: \R^n \times \R^K \times \R^{nK} \rightarrow \R^K$ satisfies
\begin{align}\label{cond:main1}
|Q(x,y,z)| \leq C |z|^2,\quad (x,y,z) \in \R^n \times \R^K \times \R^{nK}
\end{align}
for some positive constant $C$.
Note that if $u\in W^{1, 2}$, the notion of weak solution for \eqref{eqn:main1} is well-defined. We shall consider \eqref{eqn:main1} in the setting of weak solutions.
A well-known example of \eqref{eqn:main1} is  harmonic map, which satisfies the system \eqref{eqn:main1} with very special structure on the righthand side.
For example, if we take $N=S^{K-1}\subset \R^K$, then the harmonic map equation reads
\begin{equation}
\Delta u^i=-|\nabla u|^2 u^i , \quad i=1,\cdots, K.
\end{equation}
We recall a well-known result of H\'elein \cite{Helein}, that a $W^{1, 2}$ weakly harmonic map for $n=2$ is always smooth.
H\'elein's result relies crucially on the structure of the system of harmonic map, for which the righthand side has special algebraic structures.
A general system as in \eqref{eqn:main1} does not share smooth regularity as harmonic maps (for $n=2$).
%It is relatively easy and well-known that \eqref{eqn:main1} enjoys smooth regularity if we assume that $u$ is merely H\"older continuous, or even just continuous.
In fact, Frehse \cite{Frehse} has constructed an example ($n=2$), that the system \eqref{eqn:main1} has a $W^{1, 2}\cap L^\infty$ solution but it is not continuous at $x=0$.

For higher dimensions ($n\geq 3$), H\'elein's result for weakly harmonic map is no longer true.
To be more precise, the map: $x|x|^{-1}: B^3\rightarrow S^2$ is a weakly harmonic map and has a singular point at the origin.
In general it is relatively easy to construct weakly harmonic map which has a singular set of dimension $n-3$.
Surprisingly Rivi\`ere \cite{Riviere} constructed a weakly harmonic map from $B^n\rightarrow S^k$ which are not continuous everywhere in $B^n$.
Hence there is no partial regularity theory for a general weakly harmonic map. An interesting result in this direction is as follows, proved by B\'ethuel \cite{Bethuel}(Theorem I.2) as a consequence of his regularity result of \emph{stationary harmonic maps}.

\begin{theorem}[B\'ethuel]Let $u$ be a weakly harmonic map in $W^{1, n}(M, N)$ ($\text{dim}_\mathbb{R} M=n$), then $u$ is smooth ($n\geq 3$).
\end{theorem}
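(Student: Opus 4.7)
The plan is to reduce Theorem 1.1 to a small-energy $\varepsilon$-regularity statement: there exists $\varepsilon_0 = \varepsilon_0(n, N) > 0$ such that any weakly harmonic map $u \in W^{1,n}(B_1, N)$ with $\int_{B_1}|\nabla u|^n \le \varepsilon_0$ is smooth in $B_{1/2}$. Granting this, the hypothesis $u \in W^{1,n}(M,N)$ together with absolute continuity of the Lebesgue integral implies that around every $x_0 \in M$ one can choose a ball $B_r(x_0)$ on which $\int_{B_r(x_0)}|\nabla u|^n < \varepsilon_0$. Since the harmonic map system and the Dirichlet $n$-energy are simultaneously scale invariant, rescaling $u_r(y) = u(x_0+ry)$ yields a weakly harmonic map on $B_1$ with the same small energy, so smoothness at $x_0$ follows. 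Covering $M$ by such balls gives smoothness everywhere.

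To prove the $\varepsilon$-regularity I would exploit the algebraic structure of the harmonic map system. Writing $N \hookrightarrow \R^K$, the equation reads $\Delta u^i = -A^i_{jk}(u)\,\nabla u^j \cdot \nabla u^k$, where $A(u)(\cdot,\cdot)$ is essentially the second fundamental form of $N$, and crucially the right-hand side is normal to $T_uN$. Following Rivi\`ere one can encode this geometric orthogonality by recasting the system as $\Delta u^i = \Omega^i_j \cdot \nabla u^j$ with $\Omega = (\Omega^i_j)$ an antisymmetric matrix of $1$-forms in $L^n$; equivalently, in the spirit of H\'elein and B\'ethuel, one chooses a local orthonormal frame on $u^{-1}TN$ and fixes a Coulomb gauge on the induced connection. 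The antisymmetry of $\Omega$ (or the Coulomb condition $d^{*}\Omega = 0$) is what provides all of the hidden compensation.

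With a good gauge in hand, I would Hodge-decompose $\Omega = d\xi + d^{*}\eta$ with $\xi, \eta$ controlled in $W^{1,n}$ by $\|\Omega\|_{L^n}$, and substitute back into the equation. The resulting expressions contain Jacobian-type wedge products that, by the Coifman--Lions--Meyer--Semmes compensated compactness theorem, lie in the local Hardy space, and equivalently in $L^{n/2,1}_{\mathrm{loc}}$. Combined with Calder\'on--Zygmund estimates for $\Delta$ on the Lorentz scale and a standard Morrey-type iteration on dyadic balls, these improvements yield a decay $\int_{B_r(x)}|\nabla u|^n \le C r^{\alpha}$ for some $\alpha > 0$, provided $\varepsilon_0$ is small enough to absorb the nonlinear feedback. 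Morrey--Campanato embedding then gives H\"older continuity of $u$ on $B_{1/2}$.

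The main obstacle, I expect, is the construction of the good gauge when $n \ge 3$: unlike H\'elein's proof for $n=2$, no global moving frame is available in general, so one must build an approximate frame locally and fix the Coulomb gauge through a small-energy variational problem on $u^{-1}TN$, and this is precisely the step where the critical hypothesis $u \in W^{1,n}$ is exploited. Once H\"older continuity of $u$ is established, the nonlinearity $Q(u,\nabla u)$ has continuous coefficients in $u$ and quadratic growth in $\nabla u$, so a classical bootstrap using $L^p$-theory, Morrey's lemma, and Schauder estimates, iterated against differentiated versions of the equation, promotes $u$ from $C^{0,\alpha}$ all the way to $C^\infty$.
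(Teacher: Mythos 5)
Your reduction to a scale-invariant $\epsilon$-regularity lemma, followed by an iteration of a decay estimate on shrinking balls and an appeal to Morrey's theorem, is the same skeleton the paper uses in Section~3. What differs --- and this difference is the central point of the paper --- is how you obtain the small improvement at each scale. You invoke the algebraic structure of the harmonic map system: the nonlinearity is normal to $T_uN$, so it can be rewritten via an antisymmetric potential $\Omega$ or a Coulomb-gauged moving frame on $u^{-1}TN$, after which Hardy-space compensated-compactness estimates (Coifman--Lions--Meyer--Semmes) supply the decay. That is essentially B\'ethuel's original argument (later systematized by Rivi\`ere and Rivi\`ere--Struwe), and it does work. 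But the paper's observation is that for $n\geq 3$ no such compensation is needed. Since $u\in W^{1,n}$, the right-hand side $Q(x,u,\nabla u)$ already lies in $L^{n/2}$, and comparing $u$ with the Riesz potential of $Q$ against the fundamental solution --- using only O'Neil's convolution inequality on Lorentz spaces and the product estimate $\mathbf{L}^{n,\infty}\cdot\mathbf{L}^{n,n}\hookrightarrow\mathbf{L}^{n/2,\infty}$ --- yields $\|\nabla u\|_{\mathbf{L}^{n,\infty}(B_\theta)}\leq C(\theta+\epsilon_0)\|\nabla u\|_{\mathbf{L}^{n,\infty}(B_1)}$ with no reference whatsoever to the geometry of $N$ (Lemmas~3.1--3.4). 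This proves the stronger Theorem~1.3, covering any system $\Delta u = Q(x,u,\nabla u)$ with merely quadratic gradient growth, whereas your gauge-theoretic route is tied to the normal structure of the harmonic-map nonlinearity and would not extend to general $Q$ such as the Frehse-type examples in Section~5. In short: your proof is a valid but substantially heavier argument, and it misses the insight the paper is built on, namely that in the regime $n\geq 3$, $u\in W^{1,n}$, the system is slightly subcritical and the compensation phenomenon is superfluous.
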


\begin{remark}
It should be noted that $p=n$ ($n\geq 3$) for the assumption $u\in W^{1, p}(M, N)$ is critical.
Of course, for $p>n$, $u\in C^{\alpha}$ by Sobolev embedding implies the regularity of the system (\ref{eqn:main1}).
On the other hand, we can find a singular map $u$ (see the example in Sec.\ref{sec:example2}) which is in $W^{1,p}$ for any $p \in [2,n)$ but solves the system (\ref{eqn:main1}) in weakly sense.
\end{remark}

We are interested in the system \eqref{eqn:main1} when $n\geq 3$. A simple observation indicates that there is subtle difference in the system between the case $n=2$ and $n\geq 3$. When $n=2$, the righthand side would be in $L^1$ and elliptic regularity hardly provides extra information. But when $n\geq 3$, the righthand side would be in $L^{n/2}$, and elliptic regularity ($L^p$ theory) instantly implies that $u\in W^{2, n/2}$.  One cannot obtain directly that $u$ is continuous via Sobolev embedding of course (since the system is ``critical"), but $W^{2, n/2}$ is indeed a finer space than $W^{1, n}$.
In other words, we gain a little edge automatically via the system itself and the assumption $u\in W^{1, n}$, for $n\geq 3$. This difference actually leads to a totally different story of the system \eqref{eqn:main1} for $n\geq 3$. We have the following,

\begin{theorem}\label{thm:main1}
If $u$ is a weak solution to equation (\ref{eqn:main1}) and $u \in W^{1,n}(\Omega)$ for $n \geq 3$, then $u \in C^\alpha(\Omega, \R^K)$ for some $\alpha\in (0, 1)$. Moreover, if $Q \in C^\infty(\R^n \times \R^K \times \R^{nK})$, then $u\in C^\infty(\Omega, \R^K)$.
\qed
\end{theorem}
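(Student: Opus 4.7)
The plan is to first establish $u \in C^\alpha(\Omega, \R^K)$ via a small-energy decay estimate for the local $L^n$ norm of $\nabla u$, and then to bootstrap to $u \in C^\infty$ by a standard elliptic iteration when $Q$ is smooth. The starting observation is that, because $u \in W^{1, n}(\Omega)$, absolute continuity of the Lebesgue integral provides, for every compactly contained subdomain $\Omega' \Subset \Omega$ and every $\epsilon > 0$, a radius $r_0 = r_0(\epsilon) > 0$ such that $\int_{B_r(x_0)} |\nabla u|^n < \epsilon$ for all $x_0 \in \Omega'$ and $0 < r \leq r_0$. This smallness on small balls is the key ingredient that will break the critical scaling.

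On such a ball $B_r = B_r(x_0)$ I will decompose $u = v + w$, where $v$ is harmonic in $B_r$ with $v|_{\partial B_r} = u|_{\partial B_r}$, and $w$ solves $\Delta w = Q(x, u, \nabla u)$ in $B_r$ with $w|_{\partial B_r} = 0$. The mean-value property, applied componentwise to $\nabla v$, yields a scale-invariant decay
\begin{equation*}
\int_{B_{\theta r}} |\nabla v|^n \leq C_0 \theta^n \int_{B_r} |\nabla v|^n, \qquad 0 < \theta < 1,
\end{equation*}
while $L^{n/2}$-elliptic theory applied to $w$, combined with the quadratic bound $|Q| \leq C|\nabla u|^2$, gives
\begin{equation*}
\|\nabla w\|_{L^n(B_r)} \leq C \|Q\|_{L^{n/2}(B_r)} \leq C \|\nabla u\|_{L^n(B_r)}^2.
\end{equation*}
Setting $E(r) = \int_{B_r} |\nabla u|^n$ and using $|\nabla u|^n \lesssim |\nabla v|^n + |\nabla w|^n$, the two estimates combine into a decay inequality of the form
\begin{equation*}
E(\theta r) \leq \bigl(C_1 \theta^n + C_2 E(r)\bigr)\, E(r).
\end{equation*}
Choosing $\theta$ so small that $C_1 \theta^n < 1/4$ and then $r_0$ so small that $C_2 \epsilon < 1/4$, the multiplier stays below $1/2$ on every small ball, and a standard iteration produces a Morrey decay $E(r) \leq C r^\lambda$ for some $\lambda > 0$. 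By Morrey's embedding this gives $u \in C^{\lambda/n}(\Omega', \R^K)$ for every $\Omega' \Subset \Omega$, which is the first assertion of the theorem.

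For smoothness under $Q \in C^\infty$, the Morrey decay upgrades $\nabla u$ to $L^p_{\text{loc}}$ for some $p > n$, for instance by a Gehring-type higher-integrability lemma, or via the Lorentz/Morrey embeddings hinted at by the paper's keyword list. Once $\nabla u \in L^p_{\text{loc}}$ with $p > n$, the quadratic growth gives $Q \in L^{p/2}_{\text{loc}}$ with $p/2 > n/2$, so Calderón-Zygmund yields $u \in W^{2, p/2}_{\text{loc}} \hookrightarrow C^{1, \beta}_{\text{loc}}$. Differentiating the equation and iterating Schauder estimates then yield $u \in C^\infty_{\text{loc}}$.

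The main obstacle will be producing the decay inequality with a contraction factor strictly below $1$: Lorentz- or $L^p$-based bootstraps alone are scale-invariant and cannot break the critical embedding $W^{2, n/2} \hookrightarrow W^{1, n}$. The break is supplied by the quadratic structure of $Q$: once $\|\nabla u\|_{L^n(B_r)}$ is small, the nonlinear correction $w$ contributes only $E(r)^2 \ll E(r)$ at the next scale, so the harmonic decay factor $\theta^n$ survives and the iteration closes.
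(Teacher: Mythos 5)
Your argument is correct, and it takes a genuinely different (and somewhat more elementary) route than the paper for the H\"older step. You work entirely with $L^p$ norms: on each small ball $B_r(x_0)$ you split $u = v + w$ with $v$ the harmonic replacement and $w$ solving the zero Dirichlet problem for $\Delta w = Q$, obtain $\|\nabla w\|_{L^n(B_r)} \leq C\|Q\|_{L^{n/2}(B_r)} \leq C E(r)^2$ from scale-invariant Calder\'on--Zygmund/Sobolev (equivalently Hardy--Littlewood--Sobolev applied to the Green kernel of the ball), and arrive at the quadratic decay inequality $E(\theta r) \leq (C_1\theta^n + C_2 E(r))E(r)$. This closes because $E$ is monotone and small at scale $r_0$, so the multiplier stays below $1/2$ at every subsequent scale. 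The paper instead represents the nonlinear correction by the Newtonian potential of $Q$ (after extension to $\R^n$), and because $\nabla\Phi \sim |x|^{1-n}$ is only in \emph{weak}-$L^{n/(n-1)}$, not $L^{n/(n-1)}$, it is forced to track the Lorentz norm $\|\nabla u\|_{\mathbf{L}^{n,\infty}}$ and to use O'Neil's convolution theorem. The paper also extracts the smallness differently, via the product estimate $\|\nabla u\|_{\mathbf{L}^{n,\infty}}^2 \leq \|\nabla u\|_{\mathbf{L}^{n,n}}\|\nabla u\|_{\mathbf{L}^{n,\infty}} \leq \epsilon_0\|\nabla u\|_{\mathbf{L}^{n,\infty}}$, which gives a \emph{linear} contraction rather than your quadratic one; both work. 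Your approach buys simplicity and avoids Lorentz machinery entirely; the paper's buys uniformity of method with its fourth-order Theorem~\ref{thm:main2}, where harmonic replacement for $\Delta^2$ with the divergence term $\nabla\cdot Q_2$ is less convenient and the Lorentz framework is more natural. One remark on your bootstrap paragraph: Morrey decay of $\int_{B_r}|\nabla u|^n$ does not by itself give $\nabla u \in L^p_{\mathrm{loc}}$ for $p>n$; you need either a Gehring-type reverse H\"older inequality for the system, or (as the paper does by citing \cite{CWY0}) the Adams--Morrey bootstrap: once $u\in C^\alpha$, $|Q|\lesssim|\nabla u|^2$ has Morrey decay, and iterating $W^{2,p}$-Morrey estimates for $\Delta u = Q$ improves the Morrey exponent of $\nabla u$ step by step until $\nabla u$ is H\"older, after which Schauder theory finishes the job. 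Your sketch gestures at this but should be made precise along those lines.
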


With the assumption $u\in W^{1, n}$, we can say that the system \eqref{eqn:main1} is \emph{most critical} when $\text{dim}_\mathbb{R} M=2$ (the righthand side is merely integrable). In this most critical situation, the system does not share smooth regularity in view of Frehse's example. Hence the algebraic structure of the righthand side is then crucial to obtain further regularity, as indicated by H\'elein's result of harmonic map (see also Rivi\`ere's in \cite{Riviere2}).  This phenomenon appears certainly in  much broader circumstances. As an indication, we consider the following elliptic system of fourth order, for $u\in W^{2, 2}(\Omega)$,
\begin{align}\label{eqn:main2}
\Delta^2 u = Q_1(x,u,\nabla u, \nabla^2 u) + \nabla \cdot Q_2(x,u,\nabla u, \nabla^2 u)
\end{align}
where $Q_{i}: \R^n \times \R^K \times \R^{nK} \times \R^{n^2K} \rightarrow \R^K$, $i=1,2$ satisfy
\begin{align}
\Big|Q_1(x,u,\nabla u, \nabla^2 u) \Big| &\leq C
\Big[ |\nabla u|^4 + |\nabla u|^2 |\nabla^2 u| + |\nabla^2 u|^2  \Big], \label{cond:main2-1}\\
\Big|Q_2(x,u,\nabla u, \nabla^2 u) \Big| &\leq C
|\nabla u| |\nabla^2 u|, \label{cond:main2-2}
\end{align}
for some positive constant $C$. The system of biharmonic maps is one of the most interesting example of \eqref{eqn:main2}. In terms of our observation, we can say that when $n=4$ the system \eqref{eqn:main2} is \emph{most critical}. It is not hard to imagine that the system does not share smooth regularity in general in this most critical situation. Indeed, we show that a straightforward extension of Frehse's example to dimension four gives an example of a weak solution of a system \eqref{eqn:main2}, but it is not continuous at $x=0$. We have the following,

\begin{theorem}\label{thm:main3}
Consider the map $u=(u_1, u_2): \Omega \rightarrow \Sp^1 \hookrightarrow \R^2$ defined by
\begin{align}
u_1(x)=\sin (\log (\log |x|^{-1})), \quad u_2(x)=\cos (\log (\log |x|^{-1}))
\end{align}
where $\Omega=\{x \in \R^4 : \, |x| \leq \exp(-2) \}$. Then $u$ lies in $W^{2, 2}(\Omega)\cap L^\infty(\Omega)$ but is not continuous at point $x=0$, and $u$ is a $W^{2, 2}$-weak solution of the following system
\begin{align}
\Delta^2 u_1 &=\left( R_1^2+R_2^2 \right) \frac{2(u_1+u_2)}{1+|u|^2}
- \frac{20 u_1}{1+|u|^2}|\nabla u|^4 ,\\%,\label{eqn:eg-4th-1}\\
\Delta^2 u_2 &=\left( R_1^2+R_2^2 \right) \frac{2(u_2-u_1)}{1+|u|^2}
- \frac{20 u_2}{1+|u|^2}|\nabla u|^4, % \label{eqn:eg-4th-2}
\end{align}
where
\begin{align}
R_1 &= \Delta u_1 + \frac{2(u_1+u_2)}{1+|u|^2} |\nabla u|^2 ,\\
R_2 &= \Delta u_2 + \frac{2(u_2-u_1)}{1+|u|^2} |\nabla u|^2 .
\end{align}
\end{theorem}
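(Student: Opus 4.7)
The claim splits into three tasks: (a) the regularity $u\in W^{2,2}(\Omega)\cap L^\infty(\Omega)$, (b) discontinuity at the origin, and (c) the verification of the fourth-order system. Tasks (a) and (b) are routine. Since $u$ takes values in $\Sp^1$, one has $|u|\equiv 1$ and $u\in L^\infty$. Writing $\phi(x):=\log\log|x|^{-1}$ and $r=|x|$, radial differentiation gives $|\nabla u|^2=|\nabla\phi|^2=(r\log r)^{-2}$ and $|\nabla^2 u|\lesssim(r^2|\log r|)^{-1}$; integration in $\R^4$ with Jacobian $r^3\,dr$ reduces $\int_\Omega(|\nabla u|^2+|\nabla^2 u|^2)\,dx$ to the convergent integral $\int_0^{e^{-2}}r^{-1}(\log r)^{-2}\,dr$. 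For (b), $\phi(r)\to+\infty$ monotonically as $r\to 0^+$, so $\sin\phi$ and $\cos\phi$ oscillate in $[-1,1]$ with no limit at $0$.

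For (c), first use $|u|^2\equiv 1$ to replace $1+|u|^2$ by $2$, so the coefficients on the right-hand side collapse to $u_1\pm u_2$ and $10u_i$. From $\nabla u_1=u_2\nabla\phi$ and $\nabla u_2=-u_1\nabla\phi$ one computes $\Delta u_1=u_2\Delta\phi-u_1|\nabla\phi|^2$ and $\Delta u_2=-u_1\Delta\phi-u_2|\nabla\phi|^2$. Setting $A:=\Delta\phi+|\nabla\phi|^2$, the $|\nabla\phi|^2$ contributions combine to give $R_1=u_2 A$, $R_2=-u_1 A$, and so $R_1^2+R_2^2=A^2$. The statement reduces to
\[ \Delta^2 u_1=A^2(u_1+u_2)-10\,u_1|\nabla\phi|^4, \qquad \Delta^2 u_2=A^2(u_2-u_1)-10\,u_2|\nabla\phi|^4. \]

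The cleanest verification of these two identities uses the complex form $u_2+iu_1=e^{i\phi}$: then $\Delta e^{i\phi}=e^{i\phi}V$ with $V:=i\Delta\phi-|\nabla\phi|^2$, and consequently $\Delta^2 e^{i\phi}=e^{i\phi}(\Delta V+2i\,\nabla\phi\cdot\nabla V+V^2)$. The decisive structural fact, specific to dimension four, is that $\psi(x):=-\log|x|=e^{\phi(x)}$ is biharmonic on $\R^4\setminus\{0\}$ (indeed $\Delta\psi=-2r^{-2}$ and $\Delta r^{2-n}=0$ off the origin for $n=4$). Since $\Delta\psi=\psi A$, the vanishing of $\Delta^2\psi$ is equivalent to $\Delta A+2\nabla\phi\cdot\nabla A+A^2=0$ on $\Omega\setminus\{0\}$. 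Combined with the Bochner formula $\Delta|\nabla\phi|^2=2|\nabla^2\phi|^2+2\nabla\phi\cdot\nabla\Delta\phi$ and the radial identity $A=2\phi'/r$ (equivalent to $\phi''+\phi'/r+(\phi')^2=0$), this separates the real and imaginary parts of $\Delta V+2i\nabla\phi\cdot\nabla V+V^2$ into exactly the two asserted right-hand sides. Equivalently, since $\phi$ depends only on $r$, both sides are explicit polynomials in $(\log r)^{-2}$, $(\log r)^{-3}$, $(\log r)^{-4}$ with matching coefficients.

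Finally, to pass from the classical identity on $\Omega\setminus\{0\}$ to a $W^{2,2}$-weak identity on $\Omega$, note that $u\in W^{2,2}(\Omega)$ in $\R^4$ gives $|\nabla u|^4+|\nabla^2 u|^2\in L^1(\Omega)$ by the Sobolev embedding $W^{2,2}\hookrightarrow W^{1,4}$, so the right-hand side is locally integrable; since $\{0\}$ has vanishing $W^{2,2}$-capacity in $\R^4$, a standard cutoff argument upgrades test functions from $C_c^\infty(\Omega\setminus\{0\})$ to $C_c^\infty(\Omega)$. The genuinely delicate step is the algebraic matching: the coefficient $10$ in front of $u_i|\nabla\phi|^4$ is forced by the cancellations in $V^2=(b^2-a^2)-2iab$ (with $a=\Delta\phi$, $b=|\nabla\phi|^2$) together with the biharmonicity of $-\log|x|$ in dimension four, and this biharmonicity is precisely why the example lives at $n=4$ and no other.
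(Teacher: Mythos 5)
Your proposal is correct and takes a genuinely different, more structural route than the paper. For the verification that $u$ solves the system classically on $\Omega\setminus\{0\}$, the paper simply asserts that ``one can check'' it, relying on the reader to carry out a brute-force computation with $f=\log|x|^{-1}$ and its first two derivatives. Your reformulation via $u_2+iu_1=e^{i\phi}$, the reduction of the right-hand side to $A^2$ and $|\nabla\phi|^4$ with $A=\Delta\phi+|\nabla\phi|^2$, and the observation that $\psi=e^\phi=-\log|x|$ is biharmonic on $\R^4\setminus\{0\}$ (so $\Delta A+2\nabla\phi\cdot\nabla A+A^2=0$) together give a clean conceptual explanation for why the example works precisely in dimension four --- the fourth-order analogue of $\log|x|$ being harmonic in $\R^2$ behind Frehse's original example. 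I checked the radial identities $A=2\phi'/r=-2/(r^2\log(1/r))$ and the real/imaginary parts of $\Delta V+2i\nabla\phi\cdot\nabla V+V^2=(1+i)A^2-10|\nabla\phi|^4$ against the explicit rational functions of $\log(1/r)$; they match, including the coefficient $10$. For the passage to the weak formulation, the paper uses Fubini's theorem together with iterated integration by parts in each coordinate separately to avoid the origin, whereas you invoke the vanishing $W^{2,2}$-capacity of a point in $\R^4$ and a cutoff argument; both work, and indeed the cutoff computation $\int_{B_{2\epsilon}}|\Delta u|\,|\Delta\eta_\epsilon|\lesssim\epsilon^{-2}|B_{2\epsilon}|^{1/2}\|\Delta u\|_{L^2(B_{2\epsilon})}\lesssim\|\Delta u\|_{L^2(B_{2\epsilon})}\to 0$ highlights the same borderline scaling ($\alpha p=n$) that makes the capacity zero. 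The only thing your sketch leaves implicit, like the paper itself, is the full writing-out of the final coefficient matching; but the framework you set up makes that a straightforward exercise in elementary calculus.
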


As a comparison,  a weakly $W^{2, 2}$ biharmonic map in $\Omega\subset \mathbb{R}^4$ is always smooth, proved by Chang-Wang-Yang \cite{CWY} (when the target is a sphere) and Wang \cite{Wang} (for general targets). Needless to say, the special structure of biharmonic map  plays an essential role for its smooth regularity. On the other hand, it is easy to see that when $n\geq 5$, the general system \eqref{eqn:main2} is indeed \emph{less critical} with the assumption $u\in W^{2, n/2}$, similar as our observation above. Thus one might speculate that such a system always shares smooth regularity with the assumption $u\in W^{2, n/2}$.  We confirm this speculation as follows.

\begin{theorem}\label{thm:main2}
If $u$ is a weak solution to equation (\ref{eqn:main2}) and $u \in W^{2,n/2}(\Omega)$ for $n \geq 5$, then $u \in C^\alpha(\Omega, \R^K)$ for some $\alpha\in (0, 1)$. Moreover, if $Q_i \in C^\infty(\R^n \times \R^K \times \R^{nK} \times \R^{n^2K})$, $i=1,2$, then $u\in C^\infty(\Omega, \R^K)$.
\qed
\end{theorem}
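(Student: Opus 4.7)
The strategy parallels the proof of Theorem~\ref{thm:main1}, with the bi-Laplacian replacing $\Delta$ and an extra divergence term $\nabla\cdot Q_2$ to be tracked carefully. The guiding observation is that $W^{2,n/2}$ in dimension $n\geq 5$ is the fourth-order analogue of $W^{1,n}$ in dimension $n\geq 3$: both sit exactly on the borderline of the Sobolev embedding into $L^{\infty}$, both produce via one application of elliptic regularity a right-hand side that a naive $L^p$ iteration cannot improve, and both admit a genuine improvement at the level of Lorentz indices that drives a bootstrap toward continuity.

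First, from $u\in W^{2,n/2}_{\mathrm{loc}}$ and the Lorentz refinement of Sobolev embedding, I would deduce $\nabla u\in L^{n,\,n/2}_{\mathrm{loc}}$ while $\nabla^{2} u\in L^{n/2}_{\mathrm{loc}}$. H\"older's inequality in Lorentz spaces applied to \eqref{cond:main2-1}--\eqref{cond:main2-2} then gives
\[
Q_1\in L^{n/4,\,q_1}_{\mathrm{loc}},\qquad Q_2\in L^{n/3,\,q_2}_{\mathrm{loc}}
\]
for secondary indices $q_1,q_2$ computable explicitly from the present Lorentz data. Applying the Lorentz version of the Calder\'on-Zygmund estimate for $\Delta^{2}$ to the decomposition $\Delta^{2} u=Q_1+\nabla\cdot Q_2$ yields $u$ in the Lorentz-Sobolev spaces $W^{4,(n/4,q_1)}_{\mathrm{loc}}\cap W^{3,(n/3,q_2)}_{\mathrm{loc}}$. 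Re-embedding via the Lorentz-Sobolev theorem returns $\nabla^{2} u\in L^{n/2,\,q'}_{\mathrm{loc}}$ and $\nabla u\in L^{n,\,q'}_{\mathrm{loc}}$ with $q'$ strictly smaller than what we started from; the primary indices remain pinned at the critical values $n/2$ and $n$, but the secondary indices shrink by a definite factor at each round.

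Iterating this loop a bounded number of times drives the secondary index of $\nabla u$ down to $1$. At that point the limiting Lorentz-Sobolev embedding $W^{1,(n,1)}\hookrightarrow C^{0}$ of Stein produces $u\in C^{0}_{\mathrm{loc}}$, and tracking the norms quantitatively yields Morrey-type decay and hence $u\in C^{\alpha}_{\mathrm{loc}}$ for some $\alpha\in(0,1)$. Once this H\"older continuity is in hand, a standard elliptic bootstrap (combining $L^p$-theory for $\Delta^2$ with Schauder estimates after freezing the now-continuous coefficients) promotes $u$ to $C^{\infty}_{\mathrm{loc}}$ whenever $Q_1,Q_2$ are smooth.

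The main obstacle is the divergence term $\nabla\cdot Q_2$: because $Q_2$ lives in $L^{n/3}$ rather than in $L^{n/4}$, one has to check at every step that the derivative hidden in $\nabla\cdot$ is genuinely absorbed by $\Delta^{2}$ and does not cost one unit of integrability in the primary Lorentz index. The cleanest way I see is to write $\nabla\cdot Q_2=\partial_i(g_i h_i)$ with $g_i\in L^n_{\mathrm{loc}}$ and $h_i\in L^{n/2}_{\mathrm{loc}}$, and to exploit that $\nabla\cdot\Delta^{-2}$ is a Calder\'on-Zygmund operator of order $3$, so by the Lorentz continuity of such operators the primary scale is preserved while the secondary scale is compatible with the H\"older bound on $g_i h_i$. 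Without this accounting the divergence term would dominate, the primary Lorentz indices would drift, and the bootstrap would collapse; making this work is precisely where one needs the Lorentz refinement rather than merely $L^p$ integrability.
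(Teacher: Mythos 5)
Your strategy---bootstrap in the secondary Lorentz index at fixed primary indices $(n, n/2)$, terminating in $\nabla u \in L^{n,1}$ and Stein's embedding---is genuinely different from the paper's, and it has a real gap at the final step. The paper never tries to improve the secondary Lorentz index; it works entirely with the \emph{weak} Lorentz spaces $\mathbf{L}^{n,\infty}$ and $\mathbf{L}^{n/2,\infty}$ and proves an $\epsilon$-regularity decay estimate (Lemma~\ref{lem:4th-epsilon}): under a smallness hypothesis $\|\nabla u\|_{L^n(B_1)} + \|\nabla^2 u\|_{L^{n/2}(B_1)} \le \epsilon_0$, one gets
\[
\|\nabla u\|_{\mathbf{L}^{n,\infty}(B_{\theta_0})} + \|\nabla^2 u\|_{\mathbf{L}^{n/2,\infty}(B_{\theta_0})}
\le \tfrac12 \bigl( \|\nabla u\|_{\mathbf{L}^{n,\infty}(B_1)} + \|\nabla^2 u\|_{\mathbf{L}^{n/2,\infty}(B_1)} \bigr).
\]
Iterating in the radius gives $\|\nabla u\|_{\mathbf{L}^{n,\infty}(B_\rho(x^*))} \lesssim \rho^{\alpha_0}$, which feeds $L^p$ ($p<n$) bounds into Morrey's lemma (Theorem~\ref{thm:Morrey}) and yields $C^{\alpha_0}$ directly. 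The smallness $\epsilon_0$ is essential: it is the absorption mechanism that makes the representation $u = (u - v_1 - v_2) + v_1 + v_2$ (biharmonic plus Newtonian potentials of $Q_1$ and $\nabla\cdot Q_2$) close up into a contraction. Your bootstrap has no smallness parameter, and that is exactly why it cannot produce a scale-invariant decay.

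Concretely, the gap is this: from $\nabla u \in \mathbf{L}^{n,1}_{\mathrm{loc}}$ you obtain $u \in C^0_{\mathrm{loc}}$ (Stein), but $\mathbf{L}^{n,1}$ membership carries \emph{no rate} of decay of $\|\nabla u\|_{L^p(B_\rho)}$ in $\rho$---only absolute continuity of the integral. Morrey's lemma needs $\|\nabla u\|_{L^p(B_\rho)} \lesssim \rho^{(n-p)/p+\alpha}$ with $\alpha>0$, and your phrase ``tracking the norms quantitatively yields Morrey-type decay'' is precisely the missing content; nothing in the Lorentz-index iteration produces it. Moreover, $Q_1, Q_2$ depend on $\nabla u$ and $\nabla^2 u$, not just $u$, so knowing $u \in C^0$ does not by itself regularize the right-hand side and restart a bootstrap. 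The paper's argument sidesteps both problems because the contraction is already a geometric decay on balls.

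There are two further places where the details would need work. First, the localization: replacing $u$ by $\chi u$ produces commutator terms in $\Delta^2$ that must be rewritten in divergence form to avoid third derivatives, and these live on the annulus where the secondary index has not yet improved; one must check they land in better primary Lorentz exponents so as not to contaminate the iteration (plausible, but not free). The paper instead uses a Sobolev extension $\widetilde u$ of $u - L$ to all of $\R^n$ and full-space Newtonian potentials against $\Psi$, which avoids commutators entirely. Second, the count of iterations: with $p_k = q_k$ the dominant term in $Q_1$ is $|\nabla^2 u|^2 \in \mathbf{L}^{n/4, p_k/2}$, so the secondary index halves each round and you need roughly $\log_2(n/2)$ rounds; you assert this but do not verify which term dominates. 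None of these second-order issues is fatal, but the $C^0 \to C^\alpha$ step is a genuine missing idea, and filling it would essentially reconstruct the paper's $\epsilon$-regularity/Morrey argument.
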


\begin{remark}
Similarly, $p=n/2$ ($n\geq 5$) for the assumption $u\in W^{2, p}$ is critical.
The example of the singular map $u\in W^{2, p}$ for any $p \in [2, n/2)$ satisfying the system (\ref{eqn:main2}) in the distribution sense can be found in Sec.\ref{sec:example3}.
\end{remark}

We should mention that Wang \cite{Wang} has considered a special case of the system \eqref{eqn:main2}.
\begin{theorem}[Wang, C.Y., \cite{Wang}Theorem B]
Consider the following fourth order PDE with borderline nonlinearity,
\begin{equation}\label{eqn:wangsys}
\Delta^2 u=Q(x, u, \nabla u), \quad x\in \Omega\subset \mathbb{R}^4,
\end{equation}
where $Q:\Omega \times \mathbb{R}^K\times \mathbb{R}^{4K}$ satisfies
\[
Q(x, y, p)\leq C|p|^4, \quad \forall (x, y, p)\in \Omega \times \mathbb{R}^K \times \mathbb{R}^{4K}.
\]
Suppose $u\in W^{2, 2}(\Omega, \mathbb{R}^K)$ is a weak solution of \eqref{eqn:wangsys},
then there exists an $\alpha\in (0, 1)$ such that $u\in C^\alpha(\Omega, \mathbb{R}^K)$. Moreover, if $Q$ is smooth, then $u$ is smooth.
\end{theorem}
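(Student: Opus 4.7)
The dimension $n=4$ is precisely critical: the Sobolev embedding $W^{2,2}(\Omega)\hookrightarrow W^{1,4}(\Omega)$ places $\nabla u\in L^{4}_{\mathrm{loc}}$, so $Q(x,u,\nabla u)\in L^{1}_{\mathrm{loc}}$ and no elementary $L^{p}$-estimate for $\Delta^{2}$ will improve on the a~priori regularity. My plan is an $\varepsilon$-regularity / covering argument of the type now standard for critical elliptic problems.

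\textbf{Step 1 (small-energy $\varepsilon$-regularity).} I would show there exists $\varepsilon_{0}>0$ such that whenever $\int_{B_{2R}(x_{0})}|\nabla u|^{4}\leq\varepsilon_{0}$ one has a Morrey-type decay
\[
\int_{B_{r}(y)}|\nabla u|^{4}\;\leq\;C\Big(\tfrac{r}{R}\Big)^{4\alpha}\int_{B_{R}(x_{0})}|\nabla u|^{4}
\]
for every $B_{r}(y)\subset B_{R}(x_{0})$ and some $\alpha\in(0,1)$. To prove this, I would split $u=v+w$ on $B_{2R}$, with $w$ biharmonic on $B_{2R}$ with the same Navier boundary data as $u$, and $v$ solving $\Delta^{2}v=Q$ with zero Navier data. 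The biharmonic piece $w$ enjoys the classical Campanato decay of order $4$. The source part $v$ is then handled by Calder\'on--Zygmund theory in Lorentz spaces, together with the refined Sobolev embedding $W^{2,2}(\R^{4})\hookrightarrow W^{1,(4,2)}$, which places $\nabla u$ in $L^{(4,2)}$ and hence $|\nabla u|^{4}$ in a Lorentz space strictly finer than $L^{1}$. The smallness of $\|\nabla u\|_{L^{4}(B_{2R})}$ is then used to absorb the $v$-contribution into the left-hand side.

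\textbf{Step 2 (covering, Morrey/H\"older).} By absolute continuity of the integral $\int|\nabla u|^{4}$, every interior point $x_{0}\in\Omega$ lies in some ball on which the small-energy hypothesis of Step 1 holds. Iterating the decay of Step 1 on all such balls places $\nabla u$ in the local Morrey space $\mathcal{M}^{4,\,4-4\alpha}(\Omega)$, and Morrey's lemma then yields $u\in C^{\alpha}_{\mathrm{loc}}(\Omega)$.

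\textbf{Step 3 (bootstrap).} Once $u\in C^{\alpha}$, the nonlinearity $Q$ gains integrability, and the standard $L^{p}$/Schauder theory for $\Delta^{2}$ lifts $u$ to $C^{4,\alpha}$, then iteratively to $C^{\infty}$ when $Q$ is smooth.

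The main obstacle is Step 1. Unlike the biharmonic map case, where the geometry of the target supplies Jacobian-type compensations placing the nonlinearity in the Hardy space $\mathcal{H}^{1}$ via Coifman--Lions--Meyer--Semmes, here the hypothesis $|Q|\leq C|\nabla u|^{4}$ carries no algebraic compensation, so the argument must rest entirely on the smallness of $\|\nabla u\|_{L^{4}}$ on small balls. Converting this smallness into a genuine Morrey decay (rather than merely transferring smallness to smaller balls) is what forces the use of Lorentz-space refinements of Sobolev embedding and of the Calder\'on--Zygmund estimates for $\Delta^{2}$, together with a careful scaling analysis that makes the iteration close.
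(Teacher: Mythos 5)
The paper does not prove this theorem itself---it is cited from C.\,Y.\ Wang's paper \cite{Wang}, Theorem B, and the authors use it as motivation. What the paper does prove by its own methods is the higher-dimensional analogue, Theorem \ref{thm:main2}, for $u\in W^{2,n/2}(\Omega)$ with $n\geq 5$, and that proof (Lemmas \ref{lem:theta-4th}--\ref{lem:4th-epsilon}) is close in spirit to what you propose: a decomposition of $u$ into a biharmonic piece plus a Newtonian potential of the nonlinearity, Lorentz-space convolution estimates for the potential, a decay lemma for biharmonic functions in the $\mathbf{L}^{p,\infty}$ scale, and an iteration that produces Morrey decay feeding into Theorem \ref{thm:Morrey}. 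On that level your outline is correct and consistent with the paper's philosophy, which the authors say was ``partly motivated'' by Wang's argument.

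However, there is a real gap in your Step~1 that is exactly the feature separating $n=4$ from $n\geq 5$, and it is why the paper treats $n\geq 5$ itself and defers $n=4$ to Wang. You identify the refined embedding $W^{2,2}(\R^{4})\hookrightarrow W^{1,(4,2)}$, so $|\nabla u|^{4}\in \mathbf{L}^{1,1/2}_{\mathrm{loc}}$, and then you say the potential $v$ ``is handled by Calder\'on--Zygmund theory in Lorentz spaces.'' But the Lorentz--Young convolution theorem (Theorem~2.4 of the paper, i.e., O'Neil's inequality) is stated for first indices strictly greater than one, and the nonlinearity here sits at the endpoint $p=1$. Convolving $\mathbf{L}^{1,q}$ against $\nabla^{k}\Psi\in \mathbf{L}^{4/k,\infty}(\R^4)$ is not covered by the inequality as stated, so the step ``$\nabla v \in \mathbf{L}^{4,\cdot}$, $\nabla^2 v\in \mathbf{L}^{2,\cdot}$ with small norm'' is not automatic. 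For $n\geq 5$ the analogous term $|\nabla u|^{4}$ lands in $\mathbf{L}^{n/4,\infty}$ with $n/4>1$, which is why the paper's Lemma \ref{lem:4th-epsilon} closes cleanly by factoring out one copy of $\|\nabla \widetilde u\|_{\mathbf{L}^{n,n}}\leq C\epsilon_0$. At $n=4$ you cannot factor this way inside the weak-$\mathbf{L}^{p}$ scale without crossing the $p=1$ boundary. Wang's actual argument handles this endpoint case by a finer use of the $\mathbf{L}^{p,1}$--$\mathbf{L}^{p',\infty}$ duality pairing (and the strong index $q=2$ coming from the embedding) rather than a direct O'Neil convolution; your sketch does not supply this mechanism, so as written the $\varepsilon$-regularity step does not close.

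A secondary, more cosmetic issue: you phrase the decay in Step~1 as a genuine $L^4$ Morrey decay of $\int_{B_r}|\nabla u|^4$. The estimates you invoke naturally produce decay of a weak Lorentz norm such as $\|\nabla u\|_{\mathbf{L}^{4,\infty}(B_r)}$; one then passes to $L^p$ decay for $p<4$ via the embedding $\mathbf{L}^{4,\infty}(B_r)\hookrightarrow L^p(B_r)$ with the $|B_r|^{1/p-1/4}$ factor before applying Morrey (this is exactly the final step in the paper's proof of Theorem \ref{thm:main1}). That is not a gap, but aiming directly for $L^4$ decay makes the iteration harder to close than necessary.
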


We shall emphasize that Wang's result holds for $n=4$ (and $u\in W^{2, n/2}=W^{2, 2}$), and the system \eqref{eqn:wangsys} is a rather special case of \eqref{eqn:main2}. His result does not really violate our observation, but strengthens it: the most critical term involved in the righthand side of \eqref{eqn:wangsys} is $|\nabla u|^4$, but $u\in W^{2, 2}$ and $W^{2, 2}$ is a finer space than $W^{1, 4}$. If one checks Wang's proof carefully, the finer structure of $W^{2, 2}$ (than $W^{1, 4}$) is indeed used crucially to prove his Theorem B in \cite{Wang}.
One key point to the proof of Theorem \ref{thm:main2}  is to explore the finer structure of $W^{2, n/2}$ carefully than $W^{1, n}$. We use the Lorentz spaces to explore this finer structure and our proof is partly motivated by the proof of Wang, \cite{Wang}[Theorem B].

\medskip

The paper is organized as follows. In Section \ref{sec:preli}, we gather various facts concerning Lorentz space. In Section \ref{sec:proof-main1} and \ref{sec:proof-main2}, we prove Theorem \ref{thm:main1} and Theorem \ref{thm:main2}, respectively. Section \ref{sec:cexample} is devoted to the construction of singular maps and to the proof of Theorem \ref{thm:main3}. The final section contains some discussions and problems that we plan to study in the future.

\section{Preliminary results}\label{sec:preli}
In this section, we gather some facts about Lorentz space that will be used later.
First, let us recall the celebrated theorem of Morrey.
\begin{theorem}[\textbf{Morrey}] \label{thm:Morrey}
Let $1<p<\infty$, $0<\alpha<1$, and $\Omega \subset \mathbb{R}^n$ be a bounded domain with appropriately smooth boundary (such as $\partial \Omega \in C^{1,\alpha}$).
If $u \in W^{1,p}(\Omega)$ and for any $x \in \Omega$ and $0 <\rho< diam \Omega$, there holds
\begin{align}
\int_{\Omega_{\rho}(x)} |\nabla u(y)|^p dy \leq C \rho^{n-p+p\alpha},
\end{align}
where $\Omega_{\rho}(x)=\Omega \cap B_{\rho}(x)$, then $u \in C^\alpha(\overline{\Omega})$.
\qed
\end{theorem}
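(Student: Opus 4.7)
The plan is to follow the classical Campanato--Morrey argument: convert the Dirichlet-growth hypothesis into pointwise Hölder control of $u$ via the Poincaré--Wirtinger inequality and a dyadic comparison of local averages. First I would fix $x \in \overline{\Omega}$ and, for each $\rho \in (0,\operatorname{diam}\Omega)$, introduce the average $u_{x,\rho} = |\Omega_\rho(x)|^{-1}\int_{\Omega_\rho(x)} u\,dy$. The Poincaré--Wirtinger inequality on $\Omega_\rho(x)$, combined with the hypothesis, gives
$$\int_{\Omega_\rho(x)} |u - u_{x,\rho}|^p\,dy \leq C \rho^p \int_{\Omega_\rho(x)} |\nabla u|^p\,dy \leq C \rho^{n + p\alpha}.$$

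Next I would compare successive dyadic averages $u_{x,\rho_k}$ with $\rho_k = 2^{-k}\rho_0$. Using Jensen and the volume bound $|\Omega_\rho(x)| \geq c\rho^n$,
$$|u_{x,\rho_{k+1}} - u_{x,\rho_k}|^p \leq \frac{1}{|\Omega_{\rho_{k+1}}(x)|}\int_{\Omega_{\rho_k}(x)} |u - u_{x,\rho_k}|^p\,dy \leq C \rho_k^{p\alpha},$$
so $|u_{x,\rho_{k+1}} - u_{x,\rho_k}| \leq C \rho_k^\alpha$. Since $\sum_k \rho_k^\alpha$ converges, the sequence $(u_{x,\rho_k})$ is Cauchy, its limit $u^*(x)$ equals $u(x)$ almost everywhere by Lebesgue differentiation, and telescoping gives
$$|u^*(x) - u_{x,\rho}| \leq C \rho^\alpha, \qquad 0 < \rho \leq \rho_0,$$
with constant independent of $x$.

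To conclude, given $x_1, x_2 \in \overline{\Omega}$ with $\rho = |x_1 - x_2|$, I would set $R$ comparable to $\rho$ and choose a reference point $x_0$ so that $\Omega_R(x_1) \cup \Omega_R(x_2) \subset \Omega_{3R}(x_0)$, then expand
$$|u^*(x_1) - u^*(x_2)| \leq |u^*(x_1) - u_{x_1,R}| + |u_{x_1,R} - u_{x_0,3R}| + |u_{x_0,3R} - u_{x_2,R}| + |u_{x_2,R} - u^*(x_2)|.$$
Each outer term is bounded by $C R^\alpha$ by the preceding estimate, and the two middle terms are bounded by the same Jensen-plus-step-one argument applied to the common ball, delivering $|u^*(x_1) - u^*(x_2)| \leq C|x_1 - x_2|^\alpha$ and hence $u \in C^\alpha(\overline{\Omega})$.

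The one step I expect to need genuine care with is the \emph{uniformity} of the Poincaré inequality and of the lower volume bound $|\Omega_\rho(x)| \geq c\rho^n$ for $x \in \overline{\Omega}$ and $\rho$ up to a fixed scale. Both are automatic for interior balls, but at points close to $\partial\Omega$ one needs to know that $\Omega_\rho(x)$ is a Lipschitz domain with uniformly controlled constants; this is exactly what the regularity hypothesis $\partial\Omega \in C^{1,\alpha}$ supplies via a uniform interior cone condition. Once these uniform constants are in place, the rest of the proof is the essentially algebraic dyadic chain above.
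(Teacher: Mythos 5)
The paper does not actually prove Theorem~\ref{thm:Morrey}; it is stated as a classical fact (Morrey's Dirichlet-growth theorem) with a \qed and is supported by citations to standard references such as Ziemer. So there is no in-paper proof to compare against, and the only question is whether your argument is correct. It is. What you wrote is the standard Campanato--Morrey proof: Poincar\'e--Wirtinger converts the growth hypothesis on $\int_{\Omega_\rho}|\nabla u|^p$ into the Campanato seminorm bound $\int_{\Omega_\rho(x)}|u-u_{x,\rho}|^p \leq C\rho^{n+p\alpha}$; dyadic comparison of averages via Jensen plus the lower volume bound $|\Omega_\rho(x)|\geq c\rho^n$ gives $|u_{x,\rho_{k+1}}-u_{x,\rho_k}|\leq C\rho_k^\alpha$; telescoping produces a continuous representative $u^*$ agreeing with $u$ a.e.; and a three-point chain through a common larger ball gives the H\"older estimate. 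You correctly flagged the only nontrivial hypothesis-dependent step: the uniformity in $x\in\overline\Omega$ and $\rho$ of both the Poincar\'e constant on $\Omega_\rho(x)$ and the measure-density bound, which is exactly what the assumed boundary regularity (e.g.\ $\partial\Omega\in C^{1,\alpha}$, or more generally a uniform interior cone/Lipschitz condition) supplies. Two small points worth making explicit if you wrote this out in full: (i) the hypothesis is stated for $x\in\Omega$ but is used for $x\in\overline\Omega$, which follows by a limiting argument since $\Omega_\rho(x)\subset\Omega_{\rho+\epsilon}(x')$ for nearby interior $x'$; and (ii) to pass from the dyadic scales $\rho_k$ to arbitrary $\rho\in(0,\rho_0)$ one compares $u_{x,\rho}$ with the nearest $u_{x,\rho_k}$ by the same Jensen step. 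Neither is a gap, just bookkeeping.
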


Then we need some results about Lorentz spaces which play an important role in our proof. All the results presented in this section can be found in \cite{CR}, \cite{ONeil}, \cite{Tartar} and \cite{Ziemer}. For the convenience of readers, we recall the definition of Lorentz spaces.

For a measurable function $f: \Omega \rightarrow \R$, the distribution function of $f$ is defined by $D_f(\lambda)= \mu \{x \in \Omega : |f(x)| > \lambda \}$ and the decreasing rearrangement of $f$ is the function $f^*: [0, \infty) \rightarrow [0, \infty]$ defined by
\begin{align*}
f^*(t) =\inf \{ \lambda \geq 0: D_f(\lambda) \leq t\}.
\end{align*}

\begin{definition}
For $1 \leq p < \infty$ and $1 \leq q \leq \infty$, the Lorentz space $\mathbf{L}^{p,q}(\Omega)$ is defined as
\begin{align}
\mathbf{L}^{p,q}(\Omega)=\{ f \,\mbox{is a measurable function on $\Omega$}: \, \|f \|_{\mathbf{L}^{p,q}(X)}<\infty\}
\end{align}
which is defined by
\begin{align}
\|f \|_{\mathbf{L}^{p,q}(\Omega)}=
\left\{
\begin{aligned}
&\left(\int_0^\infty \left(t^{\frac{1}{p}} f^{**}(t) \right)^q \frac{dt}{t} \right)^{\frac{1}{q}},
&1&\leq p <\infty, 1\leq q < \infty, \\
&\sup_{t>0} t^{\frac{1}{p}} f^{**}(t), &1&\leq p <\infty, q = \infty,
\end{aligned}
\right.
\end{align}
where
\begin{align*}
f^{**}(t)=\frac{1}{t} \int_0^t f^*(s) ds.
\end{align*}
\qed
\end{definition}

%We should mention that the following theorems about the Lorentz space $\mathbf{L}^{p,q}(\Omega)$ hold for $1<p<\infty$.

\begin{theorem}\label{thm:L(p,q-)}
~
\begin{enumerate}
  \item If $1<p<\infty$, then $\mathbf{L}^{p,p}(\Omega)=L^p(\Omega)$ and
      \begin{align}
      \| f\|_{L^p(\Omega)} \leq \|f \|_{\mathbf{L}^{p,p}(\Omega)} \leq \frac{p}{p-1} \| f\|_{L^p(\Omega)}.
      \end{align}
  \item If $1<p <\infty$ and $1\leq q < s \leq \infty$, then $\mathbf{L}^{p,q}(\Omega)\hookrightarrow \mathbf{L}^{p,s}(\Omega)$ and
      \begin{align}
      \|f \|_{\mathbf{L}^{p,s}(\Omega)} \leq \left( \frac{q}{p}\right)^{\frac{1}{q}-\frac{1}{s}}
      \|f \|_{\mathbf{L}^{p,q}(\Omega)}.
      \end{align}
  \item If $1\leq p <\infty$ and $1\leq q  \leq \infty$, then $\mathbf{L}^{p,q}(\Omega)\hookrightarrow \mathbf{L}^{p,\infty}(\Omega)$ and
      \begin{align}
      \|f \|_{\mathbf{L}^{p,\infty}(\Omega)}\leq \left( \frac{q}{p}\right)^{\frac{1}{q}}
      \|f \|_{\mathbf{L}^{p,q}(\Omega)}
      \end{align}
\end{enumerate}
\qed
\end{theorem}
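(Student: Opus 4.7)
The plan is to derive all three statements from two basic facts about the maximal function $f^{**}$. First, $f^{**}$ is nonincreasing on $(0,\infty)$, because it is the running average of the already nonincreasing function $f^{*}$ (equivalently, differentiating $tf^{**}(t)=\int_0^t f^*$ yields $t(f^{**})'(t)=f^*(t)-f^{**}(t)\le 0$). Second, $f^{*}(t)\le f^{**}(t)$ pointwise, and the classical Hardy inequality gives $\|f^{**}\|_{L^{p}(0,\infty)}\le \frac{p}{p-1}\|f^{*}\|_{L^{p}(0,\infty)}$ for $1<p<\infty$. With these in hand, I would prove the three parts in the order (3), (2), (1), since each feeds the next.

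For part (3), I would use monotonicity of $f^{**}$ to pin down the supremum. Fixing $t>0$ and restricting the defining integral of $\|f\|_{\mathbf{L}^{p,q}}$ to $(0,t)$, monotonicity forces $f^{**}(s)\ge f^{**}(t)$ there, so
\[
\|f\|_{\mathbf{L}^{p,q}}^{q}\;\ge\; f^{**}(t)^{q}\int_{0}^{t}s^{q/p-1}\,ds\;=\;\frac{p}{q}\bigl(t^{1/p}f^{**}(t)\bigr)^{q}.
\]
Taking the $q$-th root and then the supremum over $t>0$ produces the claimed constant $(q/p)^{1/q}$.

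For part (2) with $s<\infty$, the plan is a one-line interpolation: split
\[
\bigl(t^{1/p}f^{**}(t)\bigr)^{s}=\bigl(t^{1/p}f^{**}(t)\bigr)^{s-q}\bigl(t^{1/p}f^{**}(t)\bigr)^{q},
\]
bound the first factor uniformly by $\|f\|_{\mathbf{L}^{p,\infty}}^{s-q}$, integrate the second against $dt/t$, and insert the estimate from (3). Elementary arithmetic on the exponents of $q/p$ then yields the stated constant $(q/p)^{1/q-1/s}$. The case $s=\infty$ is exactly part (3), so no separate argument is needed.

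For part (1), specialising the definition to $p=q$ collapses the norm to $\|f\|_{\mathbf{L}^{p,p}}^{p}=\int_{0}^{\infty}f^{**}(t)^{p}\,dt$. The lower bound $\|f\|_{L^{p}}\le\|f\|_{\mathbf{L}^{p,p}}$ is then immediate from $f^{*}\le f^{**}$ combined with the equimeasurability identity $\|f\|_{L^{p}}=\|f^{*}\|_{L^{p}}$, while the upper bound is precisely Hardy's inequality applied to the nonincreasing function $f^{*}$. The only genuine analytic input in the whole theorem is this invocation of Hardy; everything else is monotonicity plus bookkeeping. I do not anticipate a serious obstacle, and the main care will simply be to track the constants through the interpolation step (2).
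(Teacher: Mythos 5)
Your proof is correct, and all three steps — monotonicity of $f^{**}$ for the weak‑type bound, the multiplicative split $(t^{1/p}f^{**})^{s}=(t^{1/p}f^{**})^{s-q}(t^{1/p}f^{**})^{q}$ for the nesting, and Hardy's inequality together with $f^{*}\le f^{**}$ and equimeasurability for the $p=q$ case — are the standard textbook argument. The paper itself does not prove Theorem~\ref{thm:L(p,q-)}; it states it with a \verb|\qed| and defers to \cite{CR,ONeil,Tartar,Ziemer}, and your argument is essentially the one found there, with the constants tracked exactly.
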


\begin{theorem}\label{thm:L(p-,q)}
Suppose $f \in \mathbf{L}^{p_1, q_1}(\Omega)$, $g\in \mathbf{L}^{p_2,q_2}(\Omega)$ satisfy
$\frac{1}{p}=\frac{1}{p_1}+\frac{1}{p_2} <1$ and $\frac{1}{q} \leq \frac{1}{q_1}+\frac{1}{q_2} $, then we have $fg \in \mathbf{L}^{p, q}(\Omega)$, and
\begin{align}
\|fg \|_{\mathbf{L}^{p, q}(\Omega)} \leq
\|f \|_{\mathbf{L}^{p_1, q_1}(\Omega)} \cdot \|g \|_{\mathbf{L}^{p_2,q_2}(\Omega)}.
\end{align}
In particular, if $\Omega$ is bounded and $1<r<p<\infty$, $1 \leq q,s \leq \infty$, then we have $\mathbf{L}^{p,q}(\Omega)\hookrightarrow \mathbf{L}^{r,s}(\Omega)$ and
\begin{align}
\|f \|_{\mathbf{L}^{r,s}(\Omega)} \leq C
|\Omega|^{\frac{1}{r}-\frac{1}{p}}\|f \|_{\mathbf{L}^{p,q}(\Omega)},
\end{align}
where $C=C(n,p,q,r,s)>0.$
\qed
\end{theorem}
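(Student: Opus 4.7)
The plan is to follow O'Neil's classical approach \cite{ONeil}: both assertions will reduce to a single pointwise rearrangement estimate combined with Hölder's inequality in $L^{q}((0,\infty), dt/t)$. The key starting point is the rearrangement bound
\begin{equation*}
(fg)^{*}(2t)\,\leq\, f^{*}(t)\,g^{*}(t),
\end{equation*}
which follows from the set inclusion $\{|fg|>f^{*}(t)g^{*}(t)\}\subset\{|f|>f^{*}(t)\}\cup\{|g|>g^{*}(t)\}$, whose right-hand side has measure at most $2t$ by the definition of the decreasing rearrangement. Since $p>1$, Hardy's inequality shows that the norm $\|f\|_{\mathbf{L}^{p,q}}$, defined via $f^{**}$, is equivalent---up to a constant depending only on $p$ and $q$---to $\|t^{1/p}f^{*}(t)\|_{L^{q}((0,\infty),\,dt/t)}$; so it suffices to control this latter quantity for the product.

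Using the rearrangement inequality above and substituting $u=t/2$, one obtains
\begin{equation*}
t^{1/p}(fg)^{*}(t)\,\leq\, 2^{1/p}\bigl[u^{1/p_1}f^{*}(u)\bigr]\bigl[u^{1/p_2}g^{*}(u)\bigr]\Big|_{u=t/2},
\end{equation*}
where we have used $1/p=1/p_{1}+1/p_{2}$. Hölder's inequality in $L^{q}(du/u)$ with exponents $q_{1}$ and $q_{2}$, valid under $1/q\leq 1/q_{1}+1/q_{2}$, then splits the right-hand side into the product of the Lorentz quasi-norms, which proves the first inequality up to an absolute constant absorbable into the statement.

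For the embedding part, I would apply the product inequality to $f$ and $g=\mathbf{1}_{\Omega}$, choosing $p_{1}=p$, $q_{1}=q$, and $1/p_{2}=1/r-1/p>0$, $q_{2}=s$. From $\mathbf{1}_{\Omega}^{*}(t)=\chi_{[0,|\Omega|]}(t)$ one computes $\mathbf{1}_{\Omega}^{**}(t)=\min(1,|\Omega|/t)$, and a direct integration gives $\|\mathbf{1}_{\Omega}\|_{\mathbf{L}^{p_2,q_2}}=C(p_{2},q_{2})\,|\Omega|^{1/p_{2}}=C\,|\Omega|^{1/r-1/p}$, yielding the stated embedding.

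The main obstacle will be the careful bookkeeping of constants: the Hardy constant relating $f^{**}$ and $f^{*}$, the factor $2^{1/p}$ from the rearrangement step, and the endpoint cases $q=\infty$ or $q_{i}=\infty$, where the Hölder and Hardy steps must be read as supremum bounds. None of these affect the structure of the argument, but they are exactly what one must check in order to recover the precise constant appearing in the statement.
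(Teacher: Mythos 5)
The paper does not give its own proof of this theorem; it is quoted from the literature (\cite{CR}, \cite{ONeil}, \cite{Tartar}, \cite{Ziemer}), so there is no in-paper argument to compare against. Your reconstruction follows O'Neil's standard route, and the structure is sound: the pointwise rearrangement bound $(fg)^{*}(2t)\leq f^{*}(t)g^{*}(t)$, the rescaling $t\mapsto t/2$ using scale-invariance of $dt/t$, H\"older's inequality in $L^{q}((0,\infty),dt/t)$ with exponents $q_1,q_2$ (after replacing $q$ by $(1/q_1+1/q_2)^{-1}\leq q$ via the monotonicity in the second index, Theorem~\ref{thm:L(p,q-)}), and finally Hardy's inequality for $p>1$ to pass between the $f^{*}$-based quantity and the $f^{**}$-based norm the paper actually uses. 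The embedding is then correctly obtained by applying the product estimate with $g=\mathbf{1}_{\Omega}$, $p_{1}=p$, $q_{1}=q$, $1/p_{2}=1/r-1/p>0$, $q_{2}=s$, and the explicit computation $\|\mathbf{1}_{\Omega}\|_{\mathbf{L}^{p_{2},s}}=C(p_{2},s)\,|\Omega|^{1/p_{2}}$.

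One point you flag deserves to be emphasized rather than waved away: your argument inevitably produces a multiplicative constant (at least $2^{1/p}\cdot p/(p-1)$ from the shift and the Hardy step, plus a further factor if $1/q<1/q_{1}+1/q_{2}$), whereas the theorem as printed asserts the product inequality with constant exactly $1$. Under the $f^{**}$-based definition of $\|\cdot\|_{\mathbf{L}^{p,q}}$ used in Section~\ref{sec:preli}, the constant-$1$ statement is in fact false: take $\Omega$ of measure $1$ and $f=g=\mathbf{1}_{\Omega}$, with $p_{1}=p_{2}=4$, $q_{1}=q_{2}=2$, so $p=2$, $q=1$. A direct computation from $\mathbf{1}_{\Omega}^{**}(t)=\min(1,1/t)$ gives $\|\mathbf{1}_{\Omega}\|_{\mathbf{L}^{p,q}}^{q}=p^{2}/(q(p-1))$, hence $\|fg\|_{\mathbf{L}^{2,1}}=4$ while $\|f\|_{\mathbf{L}^{4,2}}\|g\|_{\mathbf{L}^{4,2}}=8/3<4$. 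So the discrepancy lies in the paper's statement (which, as in O'Neil's original Theorem~3.4, should carry a constant), not in your proof; your version with an absorbed constant is the correct one, and suffices for every use of this theorem elsewhere in the paper.
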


\begin{theorem}
For $1<a,c<\infty$ and $1 \leq b,d \leq \infty$, if $f \in \mathbf{L}^{p_1,q_1}(\R^n), g \in\mathbf{L}^{p_2,q_2}(\R^n)$ satisfying $\frac{1}{p_1}+\frac{1}{p_2}-1=\frac{1}{p}>0$, $\frac{1}{q_1}+\frac{1}{q_2}=\frac{1}{q} \leq 1$, then $f*g \in \mathbf{L}^{p,q}(\R^n)$, and
\begin{align}
\|f*g \|_{\mathbf{L}^{p,q}(\R^n)} \leq C \|f \|_{\mathbf{L}^{p_1,q_1}(\R^n)}
\|g \|_{\mathbf{L}^{p_2,q_2}(\R^n)}.
\end{align}

\qed
\end{theorem}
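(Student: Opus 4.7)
The plan is to follow O'Neil's original strategy and reduce the convolution inequality to a single pointwise bound on the maximal average $(f*g)^{**}$. The pivotal estimate is
\[
(f*g)^{**}(t) \;\le\; t\, f^{**}(t)\, g^{**}(t) \;+\; \int_t^\infty f^*(s)\, g^*(s)\, ds, \qquad t>0.
\]
To establish this I would fix $t$ and split both functions at their respective levels: write $f = f_1 + f_2$ with $f_1 = f \cdot \mathbf{1}_{\{|f|>f^*(t)\}}$ and similarly $g = g_1 + g_2$. The convolution $f*g$ then splits into four cross terms. The term $f_2 * g_2$ is bounded in $L^\infty$ by the Hardy--Littlewood--P\'olya symmetrization inequality $\|f_2*g_2\|_\infty \le \int_t^\infty f^* g^*$; the mixed terms $f_1 * g_2$ and $f_2 * g_1$ are controlled by Young's inequality using $\|f_1\|_1 = \int_0^t f^* = t\, f^{**}(t)$ and $\|g_2\|_\infty \le g^*(t) \le g^{**}(t)$; and $f_1 * g_1$ is handled by $L^1$ Young. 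Summing and averaging over a set of measure $t$ produces the two-term bound above.

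With this pointwise estimate in hand, the Lorentz norm splits as $\|f*g\|_{\mathbf{L}^{p,q}(\R^n)} \le \mathrm{I} + \mathrm{II}$, where
\[
\mathrm{I} = \left(\int_0^\infty \bigl(t^{1+1/p} f^{**}(t) g^{**}(t)\bigr)^q \frac{dt}{t}\right)^{1/q}, \quad \mathrm{II} = \left(\int_0^\infty \left(t^{1/p} \int_t^\infty f^*(s) g^*(s)\,ds\right)^q \frac{dt}{t}\right)^{1/q}.
\]
For $\mathrm{I}$, I would use the algebraic identity $1 + 1/p = 1/p_1 + 1/p_2$ to split $t^{1+1/p} = t^{1/p_1} \cdot t^{1/p_2}$, and then apply H\"older on $(0,\infty)$ with the measure $dt/t$ and conjugate exponents $q_1/q,\, q_2/q$; this is legitimate precisely because $1/q_1 + 1/q_2 = 1/q$, and it yields $\mathrm{I} \le \|f\|_{\mathbf{L}^{p_1,q_1}} \|g\|_{\mathbf{L}^{p_2,q_2}}$. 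For $\mathrm{II}$, I would invoke a Hardy-type inequality: for any $\beta > 0$ and $q \ge 1$,
\[
\int_0^\infty \left(t^\beta \int_t^\infty h(s)\,ds\right)^q \frac{dt}{t} \;\le\; C(\beta,q) \int_0^\infty \bigl(t^{\beta+1} h(t)\bigr)^q \frac{dt}{t},
\]
and choose $\beta = 1/p > 0$ and $h = f^* g^*$. Since $f^* \le f^{**}$ and $g^* \le g^{**}$, the right-hand side reduces to the same form as $\mathrm{I}$, and the argument closes.

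The main obstacle is the derivation of the pointwise inequality for $(f*g)^{**}$: it depends on a delicate layer-cake decomposition and a careful application of the symmetrization inequality, and one must keep the constants sharp enough that the Hardy step does not lose the endpoint. The structural hypotheses $1/p > 0$ and $1/q \le 1$ are exactly what make the Young-type decomposition and the Hardy inequality both legitimate; if either were violated, the scheme would break down and a different argument would be needed.
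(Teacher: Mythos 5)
The paper does not actually prove this statement: Theorem 2.4 is listed as a preliminary fact and attributed wholesale to the cited references (O'Neil, Tartar, Castillo--Rafeiro, Ziemer), so there is no in-paper proof to compare against. Your reconstruction is precisely O'Neil's original argument and is correct. The pointwise bound
\[
(f*g)^{**}(t) \;\le\; t\,f^{**}(t)\,g^{**}(t) + \int_t^\infty f^*(s)\,g^*(s)\,ds
\]
is O'Neil's Lemma 1.5, and passing from it to the Lorentz-norm inequality by factoring $t^{1+1/p}=t^{1/p_1}t^{1/p_2}$, applying H\"older in $L^q(dt/t)$ with exponents $q/q_1+q/q_2=1$, and using the dual Hardy inequality
\[
\left(\int_0^\infty \Bigl(t^{\beta}\!\int_t^\infty h\Bigr)^q \frac{dt}{t}\right)^{1/q} \;\le\; \frac{1}{\beta}\left(\int_0^\infty \bigl(t^{\beta+1}h(t)\bigr)^q \frac{dt}{t}\right)^{1/q}, \qquad \beta>0,\ q\ge 1,
\]
with $\beta=1/p$ is exactly how the cited sources finish. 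Two small remarks on the part you flag as the main obstacle: in the four-term decomposition, the $f_2*g_2$ piece gives $\|f_2*g_2\|_\infty\le\int_0^\infty f_2^*g_2^*\le t\,f^*(t)g^*(t)+\int_t^\infty f^*g^*$, so one actually lands on a bound of the form $C\,t\,f^{**}g^{**}+\int_t^\infty f^*g^*$ with $C>1$ rather than $C=1$; this is harmless since the theorem only claims an inequality up to a constant, but it is worth noting if one wants the sharp form. And the hypotheses $1/p>0$ and $q\ge1$ enter exactly where you say: $\beta=1/p>0$ makes the Hardy step legitimate, and $q\ge1$ is needed both for the triangle inequality splitting $\|f*g\|_{\mathbf{L}^{p,q}}\le \mathrm{I}+\mathrm{II}$ and for the H\"older/Hardy steps.
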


\begin{proposition}\label{prop:L(p,infty)}
Suppose $f(x)=|x|^{-s}$ for $x \in \R^n \setminus \{0\}$ with $0<s < n$, then
$$f \in \mathbf{L}^{\frac{n}{s}, \infty}(\R^n).$$
\end{proposition}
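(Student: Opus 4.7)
The plan is to compute everything directly from the definitions: find the distribution function, then the decreasing rearrangement, then $f^{**}$, and finally verify that $\sup_{t>0} t^{s/n} f^{**}(t)$ is finite.

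First I would compute the distribution function. Since $\{x\in\mathbb R^n: |x|^{-s} > \lambda\} = \{x : |x| < \lambda^{-1/s}\}$, we get
\[
D_f(\lambda) = \omega_n \lambda^{-n/s},
\]
where $\omega_n$ denotes the Lebesgue measure of the unit ball. Inverting this relation via the definition of $f^*$, the condition $\omega_n \lambda^{-n/s} \le t$ is equivalent to $\lambda \ge (\omega_n/t)^{s/n}$, hence
\[
f^*(t) = \omega_n^{s/n}\, t^{-s/n}.
\]

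Next I would compute $f^{**}$. Because $0<s<n$, the exponent $s/n$ lies in $(0,1)$, so $u^{-s/n}$ is integrable near $0$, and
\[
f^{**}(t) = \frac{1}{t}\int_0^t \omega_n^{s/n}\, u^{-s/n}\, du = \frac{\omega_n^{s/n}}{1-s/n}\, t^{-s/n}.
\]
Multiplying by $t^{1/p} = t^{s/n}$ yields $t^{s/n} f^{**}(t) = \omega_n^{s/n}/(1-s/n)$, a constant independent of $t$. Taking the supremum gives
\[
\|f\|_{\mathbf L^{n/s,\infty}(\mathbb R^n)} = \frac{\omega_n^{s/n}}{1-s/n} < \infty,
\]
which is exactly the claim.

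There is no real obstacle here; the only point to watch is that one must have $s<n$ strictly, both to ensure $p=n/s>1$ (so that the Lorentz space is defined in the paper's setup) and to guarantee convergence of $\int_0^t u^{-s/n}\,du$ needed for $f^{**}$. If one worked instead with the equivalent (up to a constant) quasinorm $\sup_t t^{1/p} f^*(t)$, the same computation works without even needing the integrability of $f^*$; but since the paper defines $\mathbf L^{p,\infty}$ through $f^{**}$, I would present the calculation above verbatim.
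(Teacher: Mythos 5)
Your proof is correct and follows exactly the same route as the paper: compute $D_f$, invert to get $f^*$, integrate to get $f^{**}$, and observe that $t^{s/n}f^{**}(t)$ is the constant $\omega_n^{s/n}/(1-s/n)=\tfrac{n}{n-s}\omega_n^{s/n}$, matching the paper's expression. The extra remark about why $s<n$ is needed is a nice touch but not a deviation in method.
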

\begin{proof}
Since
\begin{align*}
D_f(\lambda)=\mu \{x \in \R^n : |f(x)|> \lambda\}=b_n \lambda^{-\frac{n}{s}}
\end{align*}
where $b_n$ is the volume of the unit ball in $\R^n$, we have
\begin{align*}
f^*(t)=\inf \{ \lambda \geq 0 : D_f(\lambda) \leq t\}
= \left(\frac{b_n}{t} \right)^{\frac{s}{n}}.
\end{align*}
It follows that
\begin{align*}
t^{\frac{s}{n}} f^{**}(t)=\frac{n}{n-s} (b_n)^{\frac{s}{n}},
\end{align*}
which implies that
$$f(x)=\frac{1}{|x|^{s}} \in \mathbf{L}^{\frac{n}{s},\infty}(\R^n).$$
\end{proof}

\noindent \textbf{Convention:} For the sake of simplicity, we always denote by $B_\theta (x)$ the open ball of radius $\theta >0$ centered at a point $x$ in $\R^n$. Sometimes we may denote $B_\theta(0)$ by $B_\theta$.

\section{Proof of Theorem \ref{thm:main1}} \label{sec:proof-main1}

First, let us recall the fundamental solution $\Phi(x)$ of Laplace's equation which is defined by
\begin{align}\label{func:Green-2nd}
\Phi(x)=
\left\{
\begin{aligned}
&-\frac{1}{2\pi} \log |x|, \quad &n&=2\\
&\frac{1}{n(n-2)b_n} \frac{1}{|x|^{n-2}},\quad &n&\geq 3,
\end{aligned}
\right.
\end{align}
where $b_n$ denotes the volume of the unit ball in $\R^n$.

\begin{lemma}\label{lem:Green-2nd}
For $n \geq 3$, we have
\begin{align}
\nabla \Phi(x) \in \mathbf{L}^{\frac{n}{n-1},\infty}(\R^n).
\end{align}
\end{lemma}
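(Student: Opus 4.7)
The plan is to compute $\nabla \Phi$ explicitly from the formula \eqref{func:Green-2nd} and then invoke Proposition \ref{prop:L(p,infty)} as a black box.

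First I would differentiate: for $n \geq 3$, writing $c_n = \frac{1}{n(n-2)b_n}$, one has $\Phi(x) = c_n |x|^{-(n-2)}$, so
\begin{equation*}
\nabla \Phi(x) = -(n-2) c_n \frac{x}{|x|^n} = -\frac{1}{n b_n} \frac{x}{|x|^n},
\end{equation*}
and therefore
\begin{equation*}
|\nabla \Phi(x)| = \frac{1}{n b_n} \frac{1}{|x|^{n-1}}.
\end{equation*}

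Next I would apply Proposition \ref{prop:L(p,infty)} with exponent $s = n-1$; since $n \geq 3$ we have $0 < n-1 < n$, so the proposition is applicable and yields $|x|^{-(n-1)} \in \mathbf{L}^{n/(n-1),\infty}(\R^n)$. Each component of $\nabla \Phi$ is a bounded multiple of $|x|^{-(n-1)}$ (in fact of $x_i/|x|^n$ which is bounded in absolute value by $|x|^{-(n-1)}$), and $\mathbf{L}^{n/(n-1),\infty}$ is invariant under multiplication by bounded functions, so the conclusion follows.

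There is no real obstacle here: the statement is essentially a direct corollary of Proposition \ref{prop:L(p,infty)} once one writes down $|\nabla \Phi|$. The only thing to be a little careful about is that $\nabla \Phi$ is vector-valued, so I would either argue componentwise or note that $|\nabla \Phi(x)| = \frac{1}{n b_n}|x|^{-(n-1)}$ and use the fact that membership in $\mathbf{L}^{p,\infty}$ depends only on the distribution function of $|\nabla \Phi|$.
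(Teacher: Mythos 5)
Your proof is correct and follows essentially the same route as the paper: compute $|\nabla\Phi(x)| \le C(n)|x|^{1-n}$ and then invoke Proposition \ref{prop:L(p,infty)} with $s = n-1$. The extra remarks about componentwise treatment and invariance under bounded multipliers are fine but not needed beyond what the paper already records.
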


\begin{proof}
By a simple calculation, we have
\begin{align*}
|\nabla \Phi(x)| \leq C(n) \frac{1}{|x|^{n-1}},
\end{align*}
where $C(n)$ is a positive number only independent of $n$.

It follows from Proposition \ref{prop:L(p,infty)} that
\begin{align}
\|\nabla \Phi(x) \|_{\mathbf{L}^{\frac{n}{n-1},\infty}(\R^n)}
\leq C(n) \||x|^{1-n}\|_{\mathbf{L}^{\frac{n}{n-1},\infty}(\R^n)} <\infty,
\end{align}
which is desired.
\end{proof}

\begin{lemma}\label{lem:theta-2nd}
Suppose that $\phi \in C^\infty (B_1)$ is a harmonic function, i.e., $\Delta \phi(x) =0$ in the unit ball of $\R^n$ for $n \geq 3$. Then for any $x \in B_{\frac{1}{4}}$ and $ \theta \in (0, \frac{1}{4})$, there holds
\begin{align}\label{ineq:theta-2nd}
\|\nabla \phi \|_{\mathbf{L}^{n,\infty}(B_\theta(x))} \leq C \theta \|\nabla \phi \|_{\mathbf{L}^{n,\infty}(B_1)}
\end{align}
where $C$ is only dependent of $n$.
\qed
\end{lemma}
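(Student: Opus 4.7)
The strategy is to pass from the Lorentz norm on $B_1$ to an $L^\infty$ bound on a slightly smaller ball via harmonic interior estimates, and then to bound the Lorentz norm on $B_\theta(x)$ of a bounded function by a direct computation involving its decreasing rearrangement.

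First I would note that since $\phi$ is harmonic in $B_1$, so is every partial derivative $\partial_i \phi$. By standard interior estimates for harmonic functions (mean value plus Poisson representation), for $x\in B_{1/4}$ and $\theta<1/4$ we have $B_\theta(x)\subset B_{1/2}$, and
\begin{equation*}
\|\nabla\phi\|_{L^\infty(B_{1/2})} \le C\,\|\nabla\phi\|_{L^{n-1}(B_1)}.
\end{equation*}
Using the inclusion $\mathbf{L}^{n,\infty}(B_1)\hookrightarrow L^{n-1}(B_1)$ guaranteed by Theorem \ref{thm:L(p-,q)} (applied with $p=n$, $q=\infty$, $r=n-1$, $s=n-1$, which is legitimate since $B_1$ is bounded), I would then obtain
\begin{equation*}
M := \|\nabla\phi\|_{L^\infty(B_{1/2})} \le C\,\|\nabla\phi\|_{\mathbf{L}^{n,\infty}(B_1)},
\end{equation*}
with $C=C(n)$.

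Next, set $f=|\nabla\phi|\cdot \mathbf{1}_{B_\theta(x)}$. Since $|f|\le M$ pointwise and $f$ is supported in a set of measure $b_n\theta^n$, its decreasing rearrangement satisfies $f^*(t)\le M\,\mathbf{1}_{(0,b_n\theta^n)}(t)$, so
\begin{equation*}
f^{**}(t)=\frac{1}{t}\int_0^t f^*(s)\,ds \le M\min\!\left(1,\,\frac{b_n\theta^n}{t}\right).
\end{equation*}
Then for $t\le b_n\theta^n$ we have $t^{1/n}f^{**}(t)\le M\,t^{1/n}\le M(b_n\theta^n)^{1/n}$, and for $t\ge b_n\theta^n$ we have $t^{1/n}f^{**}(t)\le M b_n\theta^n\, t^{1/n-1}$, which is decreasing and therefore maximized at $t=b_n\theta^n$, giving the same bound $M(b_n\theta^n)^{1/n}$. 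Taking the supremum in $t$ yields
\begin{equation*}
\|\nabla\phi\|_{\mathbf{L}^{n,\infty}(B_\theta(x))} \le b_n^{1/n}\,M\,\theta \le C\,\theta\,\|\nabla\phi\|_{\mathbf{L}^{n,\infty}(B_1)},
\end{equation*}
which is exactly \eqref{ineq:theta-2nd}.

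There is no serious obstacle here; the only point requiring a little care is the first step, namely bounding the interior $L^\infty$ norm of $\nabla\phi$ by the Lorentz norm on $B_1$. I chose the exponent $n-1$ as a convenient intermediate $L^p$ space, but any $p\in(1,n)$ works equally well thanks to Theorem \ref{thm:L(p-,q)} and the smoothness of harmonic functions on compact subsets. The rest is a routine computation with the decreasing rearrangement of the indicator-type function $M\,\mathbf{1}_{B_\theta(x)}$, which yields the correct scaling factor $\theta$ on the right-hand side.
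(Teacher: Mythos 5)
Your proof is correct and takes essentially the same approach as the paper: reduce to the harmonic components $\partial_i\phi$, bound $\|\nabla\phi\|_{L^\infty(B_{1/2})}$ by $\|\nabla\phi\|_{\mathbf{L}^{n,\infty}(B_1)}$ via the interior estimate for harmonic functions combined with the Lorentz embedding of Theorem~\ref{thm:L(p-,q)} (the paper uses $L^2$ as the intermediate space, you use $L^{n-1}$; either choice works for $n\ge 3$), and then exploit the small measure of $B_\theta(x)$ to extract the factor $\theta$. The only cosmetic difference is in the last step: the paper passes through $\|\cdot\|_{\mathbf{L}^{n,n}(B_\theta(x))}=\|\cdot\|_{L^n(B_\theta(x))}\le C\theta\|\cdot\|_{L^\infty}$, whereas you compute $f^{**}$ of the truncated function by hand, both yielding the same scaling $b_n^{1/n}\theta$.
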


\begin{proof}
Unless otherwise specified, we assume $C$ to be some positive constant only dependent of $n$ in the following proof.

Since $\phi(x)$ is a harmonic function, we have, for all $i=1, \cdots, n$, $\frac{\partial \phi}{\partial x^i}$ is also a harmonic function in $B_1$. In order to prove inequality (\ref{ineq:theta-2nd}), it suffices to prove
\begin{align}
\|\phi \|_{\mathbf{L}^{n,\infty}(B_\theta(x))} \leq C \theta \|\phi \|_{\mathbf{L}^{n,\infty}(B_1)},
\end{align}
for any $x \in B_{\frac{1}{4}}$ and $ \theta \in (0, \frac{1}{4})$.

Since $\phi(x)$ is a harmonic function, we have
\begin{align*}
\|\phi \|_{L^\infty(B_{\frac{1}{2}})} \leq C \|\phi \|_{L^2(B_1)}.
\end{align*}
Due to the fact $n \geq 3$, it follows from Theorem \ref{thm:L(p,q-)} and Theorem \ref{thm:L(p-,q)} that
\begin{align}\label{ineq:theta-2nd-1}
\|\phi \|_{L^\infty(B_{\frac{1}{2}})} \leq C \|\phi \|_{L^2(B_1)}
\leq C \|\phi \|_{\mathbf{L}^{n,\infty}(B_1)}.
\end{align}
At the same time, by Theorem \ref{thm:L(p,q-)}, we have
\begin{align}\label{ineq:theta-2nd-2}
\|\phi \|_{\mathbf{L}^{n,\infty}(B_\theta(x))}
\leq  \|\phi \|_{\mathbf{L}^{n,n}(B_\theta(x))}
\leq C \|\phi \|_{L^n(B_\theta(x))}
\leq C \theta \|\phi \|_{L^\infty (B_\theta(x))}.
\end{align}
For all $x \in B_{\frac{1}{4}}$ and $ \theta \in (0, \frac{1}{4})$, we have $B_\theta(x) \subset B_{\frac{1}{2}}$. Thus, combing (\ref{ineq:theta-2nd-1}) and (\ref{ineq:theta-2nd-2}), we obtain the desired result.

\end{proof}

\begin{lemma}\label{lem:2nd-epsilon}
There exist $\epsilon_0>0$ and $\theta_0>0$ such that if $u \in W^{1,n}(B_1, \R^K)$ $(n\geq 3)$ is a weak solution to the equation (\ref{eqn:main1}) satisfying
\begin{align*}
\int_{B_1} |\nabla u|^n dx \leq \epsilon_0^n,
\end{align*}
then we have
\begin{align}\label{ineq:2nd-epsilon}
\|\nabla u \|_{\mathbf{L}^{n,\infty}(B_{\theta_0})}
\leq \frac{1}{2} \|\nabla u \|_{\mathbf{L}^{n,\infty}(B_1)}.
\end{align}
\end{lemma}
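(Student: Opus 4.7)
The plan is to decompose $u = h + \tilde{w}$ on $B_1$, where $\tilde{w}$ is the Newtonian potential of the right-hand side and $h$ is harmonic, and then estimate each piece in the $\mathbf{L}^{n,\infty}$ norm using the Lorentz-space toolkit of Section~\ref{sec:preli} together with Lemma~\ref{lem:Green-2nd} and Lemma~\ref{lem:theta-2nd}.

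Concretely, set
\begin{equation*}
\tilde{w}(x) = \int_{B_1} \Phi(x-y)\, Q\bigl(y, u(y), \nabla u(y)\bigr)\, dy,
\end{equation*}
so that, extending $Q$ by zero outside $B_1$, one has $\Delta \tilde{w} = Q$ in $B_1$; hence $h := u - \tilde{w}$ is harmonic on $B_1$. The growth assumption (\ref{cond:main1}) gives $|Q| \leq C|\nabla u|^2$, and Theorem~\ref{thm:L(p-,q)} applied to $|\nabla u|^2 = |\nabla u|\cdot|\nabla u|$ with $p_1 = p_2 = n$, $q_1 = n$, $q_2 = \infty$ yields
\begin{equation*}
\|Q\|_{\mathbf{L}^{n/2, n}(B_1)} \leq C \|\nabla u\|_{L^n(B_1)} \|\nabla u\|_{\mathbf{L}^{n,\infty}(B_1)} \leq C \epsilon_0 \|\nabla u\|_{\mathbf{L}^{n,\infty}(B_1)}.
\end{equation*}
Since $\nabla \tilde{w} = \nabla \Phi \ast (\chi_{B_1} Q)$, combining Lemma~\ref{lem:Green-2nd} with the Lorentz-space convolution inequality (checking $\tfrac{n-1}{n} + \tfrac{2}{n} - 1 = \tfrac{1}{n}$ and $0 + \tfrac{1}{n} = \tfrac{1}{n}$) gives
\begin{equation*}
\|\nabla \tilde{w}\|_{\mathbf{L}^{n,\infty}(B_1)} \leq \|\nabla \tilde{w}\|_{\mathbf{L}^{n,n}(\R^n)} \leq C \epsilon_0 \|\nabla u\|_{\mathbf{L}^{n,\infty}(B_1)}.
\end{equation*}

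For the harmonic correction, Lemma~\ref{lem:theta-2nd} applied at $x = 0$ with radius $\theta \in (0, 1/4)$ yields
\begin{equation*}
\|\nabla h\|_{\mathbf{L}^{n,\infty}(B_\theta)} \leq C\theta \|\nabla h\|_{\mathbf{L}^{n,\infty}(B_1)} \leq C\theta \bigl(\|\nabla u\|_{\mathbf{L}^{n,\infty}(B_1)} + \|\nabla \tilde{w}\|_{\mathbf{L}^{n,\infty}(B_1)}\bigr).
\end{equation*}
Adding the two pieces and using $\nabla u = \nabla h + \nabla \tilde{w}$,
\begin{equation*}
\|\nabla u\|_{\mathbf{L}^{n,\infty}(B_\theta)} \leq \bigl(C \epsilon_0 + C \theta (1 + C \epsilon_0)\bigr) \|\nabla u\|_{\mathbf{L}^{n,\infty}(B_1)}.
\end{equation*}
Fixing $\theta_0 \in (0, 1/4)$ with $C\theta_0 \leq 1/8$ and then taking $\epsilon_0$ small enough to force the total constant below $1/2$ closes the argument.

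The main subtlety is the careful bookkeeping of Lorentz exponents. A naive estimate $\||\nabla u|^2\|_{\mathbf{L}^{n/2,\infty}} \leq \|\nabla u\|_{\mathbf{L}^{n,\infty}}^2$ would produce a quadratic inequality in the unknown $\mathbf{L}^{n,\infty}$ norm that could not be iterated to yield decay. The decisive trick is to split one factor of $\nabla u$ into $L^n = \mathbf{L}^{n,n}$, which is small by the hypothesis $\int_{B_1}|\nabla u|^n \leq \epsilon_0^n$, while keeping the other factor in the weak space $\mathbf{L}^{n,\infty}$ so that the estimate remains linear on the right; the harmonic decay from Lemma~\ref{lem:theta-2nd} then supplies the small linear factor $\theta$ needed to absorb the $O(\epsilon_0)$ error and close the iteration below $1/2$.
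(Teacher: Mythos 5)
Your proof is correct and follows essentially the same decomposition and Lorentz-space estimates as the paper's; the only cosmetic differences are that you convolve against $\chi_{B_1}Q$ directly rather than first extending $u$ to $\R^n$, and you track the second Lorentz index as $n$ (via $\mathbf{L}^{n/2,n}$ and $\mathbf{L}^{n,n}$) rather than $\infty$, both of which yield the same linear bound $C\epsilon_0\|\nabla u\|_{\mathbf{L}^{n,\infty}(B_1)}$. One tiny slip worth noting: with the convention $-\Delta\Phi=\delta_0$ implicit in \eqref{func:Green-2nd}, your $\tilde{w}$ should carry a minus sign, i.e.\ $\tilde{w}=-\Phi\ast(\chi_{B_1}Q)$, so that $\Delta\tilde{w}=Q$ in $B_1$ and $h=u-\tilde{w}$ is harmonic; this has no effect on any of the norm estimates.
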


\begin{proof}
Unless otherwise specified, we assume $C$ to be some positive constant only dependent of $n$ in the following proof.

For $u \in W^{1,n}(B_1, \R^K)$, there exists a extension $\widetilde{u} \in  W^{1,n}(\R^n, \R^K)$ such that
\begin{align*}
\widetilde{u}|_{B_1}=u|_{B_1} - L,
\end{align*}
where $L=\frac{1}{|B_1|}\int_{B_1} u(x) dx$, and
\begin{align*}
\| \nabla \widetilde{u}\|_{L^n(\R^n)}
& \leq C \|\nabla u \|_{L^n(B_1)} \leq C \epsilon_0 \notag \\
\| \nabla \widetilde{u}\|_{\mathbf{L}^{n,\infty}(\R^n)}
& \leq C \|\nabla u \|_{\mathbf{L}^{n,\infty}(B_1)}.
\end{align*}
Define a function
\begin{align*}
v(x)= -\int_{\R^n} \Phi(x-y) Q(y,\widetilde{u}(y)+L,\nabla \widetilde{u}(y)) dy
\end{align*}
where $\Phi(x)$ is the fundamental function defined by (\ref{func:Green-2nd}) for $n \geq 3$.
Then we have that
\begin{align}
\| \nabla v\|_{\mathbf{L}^{n,\infty}(\R^n)}
&\leq C \| \nabla \Phi\|_{\mathbf{L}^{\frac{n}{n-1},\infty}(\R^n)}
       \| Q\|_{\mathbf{L}^{\frac{n}{2},\infty}(\R^n)} \notag \\
&\leq C \| |\nabla \widetilde{u}|^2\|_{\mathbf{L}^{\frac{n}{2},\infty}(\R^n)} \notag \\
&\leq C \| \nabla \widetilde{u}\|^2_{\mathbf{L}^{n,\infty}(\R^n)} \notag \\
&\leq C \| \nabla \widetilde{u}\|_{\mathbf{L}^{n,n}(\R^n)}
        \| \nabla \widetilde{u}\|_{\mathbf{L}^{n,\infty}(\R^n)} \notag \\
&\leq C \epsilon_0 \| \nabla \widetilde{u}\|_{\mathbf{L}^{n,\infty}(\R^n)} \notag \\
&\leq C \epsilon_0 \| \nabla u\|_{\mathbf{L}^{n,\infty}(B_1)} \label{ineq:2nd-epsilon-1}
\end{align}
and
\begin{align*}
\Delta (u-v) =0, \quad \mbox{in} \,\, B_{1}.
\end{align*}
Thus, by Lemma \ref{lem:theta-2nd} and inequality (\ref{ineq:2nd-epsilon-1}), we have, for all $\theta \in (0,\frac{1}{4})$
\begin{align}
\|\nabla (u-v) \|_{\mathbf{L}^{n,\infty}(B_\theta)}
& \leq C \theta \, \|\nabla (u-v) \|_{\mathbf{L}^{n,\infty}(B_1)} \notag \\
& \leq C \theta \left(\|\nabla u\|_{\mathbf{L}^{n,\infty}(B_1)}
         + \|\nabla v\|_{\mathbf{L}^{n,\infty}(B_1)}\right) \notag \\
& \leq C \theta \left(\|\nabla u\|_{\mathbf{L}^{n,\infty}(B_1)}
         + \|\nabla v\|_{\mathbf{L}^{n,\infty}(\R^n)}\right) \notag \\
& \leq C \theta \left(\|\nabla u\|_{\mathbf{L}^{n,\infty}(B_1)}
         + \epsilon_0 \|\nabla u\|_{\mathbf{L}^{n,\infty}(B_1)}\right).\label{ineq:2nd-epsilon-2}
\end{align}
It follows that
\begin{align}
\| \nabla u\|_{\mathbf{L}^{n,\infty}(B_\theta)}
&\leq \| \nabla (u-v)\|_{\mathbf{L}^{n,\infty}(B_\theta)}
     + \| \nabla v\|_{\mathbf{L}^{n,\infty}(B_\theta)} \notag \\
&\leq C \theta \|\nabla u\|_{\mathbf{L}^{n,\infty}(B_1)}
     + C \epsilon_0 \|\nabla u\|_{\mathbf{L}^{n,\infty}(B_1)}.
\end{align}
Hence, we can choose $\theta=\theta_0  \in (0,\frac{1}{4})$ and $\epsilon_0>0$ small enough such that inequality (\ref{ineq:2nd-epsilon}) holds.
\end{proof}

\bigskip
\noindent \textbf{Proof of Theorem \ref{thm:main1}}
Now we are in the position to prove the main result. For any precompact domain $U \subset \subset \Omega$, we have that for any $\epsilon_0>0$ there exists a $\rho_0\in (0, dist(\overline{U}, \partial \Omega))$ such that
\begin{align*}
\sup_{x \in \widetilde{U}} \int_{B_{\rho_0}(x)} |\nabla u(y)|^n dy \leq \epsilon_0^n.
\end{align*}
It is easy to verify that for any fix point $x^* \in \overline{U}$, $u_{x^*,\rho_0}(y)=u(x^*+ \rho_0 y)$ satisfies the same system of $u(x)$ in $B_1$, i.e.,
\begin{align*}
\Delta u_{x^*,\rho_0}(y)= \widetilde{Q}( y, u_{x^*,\rho_0}, \nabla u_{x^*,\rho_0})
\end{align*}
which satisfies
\begin{align*}
|\widetilde{Q}( y, u_{x^*,\rho_0}(y), \nabla u_{x^*,\rho_0})|
\leq C | \nabla u_{x^*,\rho_0}|^2
\end{align*}
for some positive constant $C$ independent of $u$.

Since
$\| \nabla u_{x^*, \rho_0} \|_{L^n(B_1)}
=\| \nabla u \|_{L^n(B_{\rho_0}(x^*))} \leq \epsilon_0$,
it follows from Lemma \ref{lem:2nd-epsilon} that there exists a $\theta_0 \in (0, \frac{1}{4})$ such that for any $x^* \in \overline{U}$, we have
\begin{align}
\| \nabla u_{x^*, \rho_0}\|_{L^{n,\infty}(B_{\theta_0})}
=\| \nabla u\|_{L^{n,\infty}(B_{\theta_0 \rho_0}(x^*))} \notag \\
\leq \frac{1}{2} \| \nabla u_{x^*, \rho_0}\|_{L^{n,\infty}(B_1)}
=\frac{1}{2} \| \nabla u\|_{L^{n,\infty}(B_{\rho_0}(x^*))}.\label{ineq:proof-2nd-1}
\end{align}
By iterating above inequality (\ref{ineq:proof-2nd-1}), for any $l \in \N$ and $x^* \in \overline{U}$, we have
\begin{align*}
\| \nabla u\|_{L^{n,\infty}(B_{\theta_0^l \rho_0}(x^*))}
\leq \frac{1}{2^l} \| \nabla u\|_{L^{n,\infty}(B_{\rho_0}(x^*))}.
\end{align*}
By replacing $\theta_0^l \rho_0$ by $\rho$, we have
\begin{align*}
\| \nabla u\|_{L^{n,\infty}(B_{\rho}(x^*))}
\leq C \rho^{\alpha_0} \| \nabla u\|_{L^{n,\infty}(B_{\rho_0}(x^*))}
\leq C \rho^{\alpha_0},
\end{align*}
where $\alpha_0=\frac{\log 2}{\log \theta_0^{-1}} \in (0,1)$ and $C$ is only dependent of $\rho_0$, $\theta_0$ and $\epsilon_0$.
\\
Hence, by Theorem \ref{thm:L(p-,q)}, we have that for any $1<p<n$, $x^* \in \overline{U}$ and $0<\rho<\rho_0$, there holds
\begin{align}
\|\nabla u\|_{L^p(B_{\rho}(x^*))}
&\leq \|\nabla u\|_{\mathbf{L}^{p,p}(B_{\rho}(x^*))} \notag \\
&\leq C |B_{\rho}(x^*)|^{\frac{1}{p}-\frac{1}{n}}
     \|\nabla u\|_{\mathbf{L}^{n,\infty}(B_{\rho}(x^*))} \notag \\
&\leq C \rho^{\frac{n-p}{p}+\alpha_0}.
\end{align}
It follows from Theorem \ref{thm:Morrey} that $ u \in C^{\alpha_0}(\overline{U}, M)$ which implies that $ u \in C^{\alpha_0}(\Omega, M)$. If $Q \in C^\infty(\Omega \times \R^K \times \R^{nK})$ and $u\in C^\alpha$, it is well-known that such a system enjoys smooth regularity; see \cite{CWY0}[Section 3] for example. Hence, the proof is completed.
\qed

\section{Proof of Theorem \ref{thm:main2}}\label{sec:proof-main2}
Since the proof of Theorem \ref{thm:main2} is similar as that of Theorem \ref{thm:main1}, we only give the key points which are different from the counterparts in Section \ref{sec:proof-main1}. Unless otherwise specified in this section, we always assume the dimension of domain is not less than four, i.e., $n \geq 4$.

\medskip

First, let us recall the fundamental solution $\Psi(x)$ of $\Delta^2$ on $\R^n$ which is defined by
\begin{align}\label{func:Green-4th}
\Psi(x)=
\left\{
\begin{aligned}
&c_4 \log |x|, \quad &n&=4\\
&c_n \frac{1}{|x|^{n-4}},\quad &n&\geq 5,
\end{aligned}
\right.
\end{align}
where $c_n$ ($n\geq 4$) are constants only dependent of $n$.
It follows from Proposition \ref{prop:L(p,infty)} that, for $n \geq 5$, we have, for $k=1,2,3$,
\begin{align}
\nabla^k \Psi(x) \in \mathbf{L}^{\frac{n}{n-4+k},\infty}(\R^n).
\end{align}

Then, we need some Caccioppoli-type inequality for smooth biharmonic functions to prove Lemma \ref{lem:theta-4th}.
\begin{lemma}\label{lem:cacci}
Suppose that $\phi \in C^\infty(B_1)$ is a biharmonic function, i.e., $\Delta^2 \phi =0$ in $B_1$. Then for any $\theta \in (0,1)$, we have
\begin{align}
\int_{B_\theta} |\nabla \phi|^2 dx + \int_{B_\theta} |\nabla^2 \phi|^2 dx \leq C \int_{B_1} |\phi|^2 dx,
\end{align}
where $C=C(n,\theta)>0$, provided that $\phi \in L^2(B_1)$.
\end{lemma}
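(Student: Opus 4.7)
The plan is to close the biharmonic Caccioppoli estimate by a standard test-function and absorption scheme combined with an iteration lemma. Fix $\theta\in(0,1)$ and let $\eta \in C_c^\infty(B_t)$ be a smooth cutoff with $\eta \equiv 1$ on $B_s$ and $|\nabla^k\eta| \leq C/(t-s)^k$, for parameters $\theta \leq s < t \leq 1$.

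First I would test the equation $\Delta^2\phi = 0$ against $\eta^4\phi$ and integrate by parts twice to obtain
\begin{equation*}
\int_{B_1}\eta^4(\Delta\phi)^2\,dx = -2\int_{B_1}\nabla(\eta^4)\cdot\nabla\phi\,\Delta\phi\,dx - \int_{B_1}\phi\,\Delta(\eta^4)\,\Delta\phi\,dx.
\end{equation*}
After expanding $\nabla(\eta^4)=4\eta^3\nabla\eta$ and $\Delta(\eta^4)=4\eta^3\Delta\eta+12\eta^2|\nabla\eta|^2$, applying Young's inequality (noting that $(\Delta\eta^4)^2/\eta^4$ remains uniformly bounded thanks to the $\eta$-factors), and absorbing a fraction of $\int\eta^4(\Delta\phi)^2$ back into the left hand side, I arrive at the first-order estimate
\begin{equation*}
\int_{B_s}(\Delta\phi)^2\,dx \;\leq\; \frac{C}{(t-s)^2}\int_{B_t}|\nabla\phi|^2\,dx + \frac{C}{(t-s)^4}\int_{B_t}\phi^2\,dx. \qquad (\ast)
\end{equation*}

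Next, for any smooth $\phi$ (biharmonicity is not needed here) and any cutoff $\zeta\in C_c^\infty(B_\tau)$ with $\zeta\equiv 1$ on $B_\sigma$, integration by parts in $\int\zeta^2|\nabla\phi|^2\,dx = -\int\phi\,\mathrm{div}(\zeta^2\nabla\phi)\,dx$ followed by Young yields the reverse-type inequality
\begin{equation*}
\int_{B_\sigma}|\nabla\phi|^2\,dx \;\leq\; \epsilon\int_{B_\tau}(\Delta\phi)^2\,dx + \frac{C(\epsilon)}{(\tau-\sigma)^2}\int_{B_\tau}\phi^2\,dx. \qquad (\dagger)
\end{equation*}

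The crux is to close the coupling between $(\ast)$ and $(\dagger)$. Applying $(\ast)$ on $B_s\subset B_{(s+t)/2}$, then $(\dagger)$ on $B_{(s+t)/2}\subset B_t$, and choosing $\epsilon$ to be a suitably small multiple of $(t-s)^2$, I obtain
\begin{equation*}
\int_{B_s}(\Delta\phi)^2\,dx \;\leq\; \tfrac{1}{2}\int_{B_t}(\Delta\phi)^2\,dx + \frac{C}{(t-s)^6}\int_{B_t}\phi^2\,dx,
\end{equation*}
uniformly for $\theta\leq s<t\leq 1$. A standard iteration lemma (of Giaquinta--Giusti type) then gives $\int_{B_\theta}(\Delta\phi)^2\,dx \leq C(n,\theta)\int_{B_1}\phi^2\,dx$, and substituting back into $(\dagger)$ yields the companion bound for $\int_{B_\theta}|\nabla\phi|^2\,dx$. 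Finally, to pass from $(\Delta\phi)^2$ to $|\nabla^2\phi|^2$, I would introduce one final cutoff $\xi\in C_c^\infty(B_{(1+\theta)/2})$ with $\xi\equiv 1$ on $B_\theta$ and apply the identity $\|\nabla^2 u\|_{L^2(\R^n)}=\|\Delta u\|_{L^2(\R^n)}$ for $u=\xi\phi\in C_c^\infty(\R^n)$, then expand $\Delta(\xi\phi)=\xi\Delta\phi+2\nabla\xi\cdot\nabla\phi+\phi\Delta\xi$ and control each term by the bounds just established. The main obstacle is precisely the coupling between $(\ast)$ and $(\dagger)$: neither inequality closes on its own, and a naive iteration produces coefficients that depend badly on the width $t-s$; the iteration lemma is what converts the local absorption scheme into a global bound with constant depending only on $n$ and $\theta$.
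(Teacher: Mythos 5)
Your argument is correct and fills a gap the paper leaves to a citation: the authors give no proof of Lemma~\ref{lem:cacci}, deferring instead to Shen's Lemma~2.1, so there is no argument in the paper to compare against. Your route is the standard Caccioppoli-plus-iteration scheme, and the bookkeeping closes. Testing $\Delta^2\phi=0$ against $\eta^4\phi$, the identity $\Delta(\eta^4\phi)=\eta^4\Delta\phi+2\nabla(\eta^4)\cdot\nabla\phi+\phi\,\Delta(\eta^4)$ gives exactly your displayed equation; the scalings $|\nabla(\eta^4)|\lesssim \eta^3(t-s)^{-1}$ and $|\Delta(\eta^4)|\lesssim \eta^2(t-s)^{-2}$ leave a full $\eta^4$ in front of $(\Delta\phi)^2$ after Young, so both absorptions in $(\ast)$ are legitimate. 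The interpolation $(\dagger)$ is elementary, and the coupling $\epsilon\sim(t-s)^2$ does produce the exponent $6$: the $(t-s)^{-2}$ prefactor from $(\ast)$, times $C(\epsilon)\sim\epsilon^{-1}\sim(t-s)^{-2}$, times the $((t-s)/2)^{-2}$ sitting in $(\dagger)$'s $\phi^2$ coefficient, gives $(t-s)^{-6}$. Giaquinta's iteration lemma then applies with $\delta=1/2$, $\alpha=6$. Two small points worth stating in a finished write-up: (i) since $\phi$ is smooth only on the \emph{open} ball, $f(t)=\int_{B_t}(\Delta\phi)^2$ need not be bounded up to $t=1$, so run the iteration on $[\theta, t_0]$ with $t_0<1$ and let $t_0\to 1$ at the end; (ii) the final identity $\|\nabla^2(\xi\phi)\|_{L^2}=\|\Delta(\xi\phi)\|_{L^2}$ requires the $(\Delta\phi)^2$ and $|\nabla\phi|^2$ bounds on $B_{(1+\theta)/2}$, not $B_\theta$, so carry out the iteration first at the larger radius $(1+\theta)/2$. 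Neither is a genuine gap; both are routine.
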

\begin{proof}
We refer the reader to \cite{Shen}[Lemma 2.1] for the proof of Lemma \ref{lem:cacci}.
\end{proof}

Now it is time for us to prove the similar result as Lemma \ref{lem:theta-2nd} for smooth biharmonic functions.
\begin{lemma}\label{lem:theta-4th}
Suppose that $\phi \in C^\infty(B_1) \cap W^{2,\frac{n}{2}}(B_1)$ is a biharmonic function, i.e., $\Delta^2 \phi =0$ in the unit ball of $\R^n$ for $n \geq 5$. Then for any $x \in B_{\frac{1}{4}}$ and $\theta \in (0, \frac{1}{4})$, there holds
\begin{align}\label{ineq:theta-4th}
\|\nabla \phi \|_{\mathbf{L}^{n,\infty}(B_{\theta}(x))} +
\|\nabla^2 \phi \|_{\mathbf{L}^{\frac{n}{2},\infty}(B_{\theta}(x))}
\leq C \theta
\left(
\|\nabla \phi \|_{\mathbf{L}^{n,\infty}(B_1)} +
\|\nabla^2 \phi \|_{\mathbf{L}^{\frac{n}{2},\infty}(B_1)}
\right),
\end{align}
where $C$ is only dependent of $n$.
\end{lemma}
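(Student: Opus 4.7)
The approach is to follow the strategy of Lemma \ref{lem:theta-2nd}, replacing harmonic functions by biharmonic ones. The key observation is that $\Delta^2$ commutes with partial derivatives, so each component of $\nabla\phi$ and of $\nabla^2\phi$ is itself a smooth biharmonic function in $B_1$. I will first reduce the Lorentz norms of $\nabla\phi$ and $\nabla^2\phi$ over the small ball $B_\theta(x)$ to their $L^\infty$ norms on $B_{1/2}$ via the Lorentz embeddings, then invoke interior estimates for smooth biharmonic functions to bound these $L^\infty$ norms by $L^2$ norms on $B_1$, and finally convert the $L^2$ norms back to the Lorentz norms appearing on the right-hand side.

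More concretely, fix $x\in B_{1/4}$ and $\theta\in(0,1/4)$, so that $B_\theta(x)\subset B_{1/2}$. By Theorem \ref{thm:L(p,q-)} (with $p=q=n$ and $p=q=n/2$ respectively) together with the crude bound $\|\cdot\|_{L^p(B_\theta(x))}\leq |B_\theta(x)|^{1/p}\|\cdot\|_{L^\infty(B_\theta(x))}$, one obtains
\begin{align*}
\|\nabla\phi\|_{\mathbf{L}^{n,\infty}(B_\theta(x))} &\leq C\|\nabla\phi\|_{L^n(B_\theta(x))}\leq C\theta\,\|\nabla\phi\|_{L^\infty(B_{1/2})},\\
\|\nabla^2\phi\|_{\mathbf{L}^{n/2,\infty}(B_\theta(x))} &\leq C\|\nabla^2\phi\|_{L^{n/2}(B_\theta(x))}\leq C\theta^2\,\|\nabla^2\phi\|_{L^\infty(B_{1/2})}.
\end{align*}
Since $\nabla\phi$ and $\nabla^2\phi$ are smooth biharmonic, interior elliptic estimates (obtained by iterating the Caccioppoli-type bound of Lemma \ref{lem:cacci} together with Sobolev embedding, or equivalently through the biharmonic Poisson representation) supply
\begin{align*}
\|\nabla\phi\|_{L^\infty(B_{1/2})}\leq C\|\nabla\phi\|_{L^2(B_1)},\qquad
\|\nabla^2\phi\|_{L^\infty(B_{1/2})}\leq C\|\nabla^2\phi\|_{L^2(B_1)}.
\end{align*}
Finally, because $n\geq 5$ gives $n>2$ and $n/2>2$, Theorem \ref{thm:L(p-,q)} on the bounded domain $B_1$ yields $\|\nabla\phi\|_{L^2(B_1)}\leq C\|\nabla\phi\|_{\mathbf{L}^{n,\infty}(B_1)}$ and $\|\nabla^2\phi\|_{L^2(B_1)}\leq C\|\nabla^2\phi\|_{\mathbf{L}^{n/2,\infty}(B_1)}$. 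Chaining the three steps and absorbing $\theta^2\leq\theta$ produces the claimed inequality.

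The only ingredient that is not immediate from the Lorentz-space toolkit already assembled in Section \ref{sec:preli} is the biharmonic interior $L^\infty$-$L^2$ estimate for $\nabla\phi$ and $\nabla^2\phi$; this is where it matters that partial derivatives of a biharmonic function are again biharmonic, and it is the main technical point distinguishing the fourth-order argument from Lemma \ref{lem:theta-2nd}, whose analogous step is immediate from the classical mean-value property for harmonic functions. Once that estimate is in hand, the remainder is a direct transcription of the Lorentz-space manipulations of the second-order case, with the small factor $\theta$ arising from the volume gain $|B_\theta(x)|^{1/n}\sim\theta$ (the $\nabla^2$-term even producing $\theta^2$, which is trivially absorbed since $\theta<1$).
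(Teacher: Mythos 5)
Your proposal is correct and follows essentially the same route as the paper's proof: reduce the Lorentz norm on $B_\theta(x)$ to an $L^\infty$ bound on $B_{1/2}$ (picking up the factor $\theta$ from the volume of the small ball), control the $L^\infty$ norm of the biharmonic derivative by its $L^2$ norm on $B_1$ via an interior estimate built on Lemma \ref{lem:cacci}, and then pass from $L^2$ back to the weak Lorentz norm using Theorem \ref{thm:L(p-,q)}. The paper simply organizes the argument by proving the estimate for a generic biharmonic $\phi$ with $p\in[\tfrac{n}{2},n]$ and then applying it to the (biharmonic) components of $\nabla\phi$ and $\nabla^2\phi$, whereas you treat the two orders separately; the content is the same.
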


\begin{proof}
Throughout the proof, $C$ always stands for  positive constants only dependent of $n$.
Since $\phi$ is a biharmonic function, we have, for all $i, j=1,\cdots,n$, $\frac{\partial \phi}{\partial x^i}$ and $\frac{\partial^2 \phi}{\partial x^i \partial x^j}$ are also biharmonic functions. In order to prove inequality (\ref{ineq:theta-4th}), it suffices to prove
\begin{align}
\| \phi \|_{\mathbf{L}^{p, \infty}(B_\theta(x))} &\leq C \theta
\| \phi \|_{\mathbf{L}^{p, \infty}(B_1)}
\end{align}
for all $x \in B_{\frac{1}{4}}$, $\theta \in (0,\frac{1}{4})$ and $ p \in [\frac{n}{2},n]$.

\medskip
By the standard elliptic estimate, we have, for $n \geq 5$ and $p \in [\frac{n}{2},n]$,
\begin{align}
\| \phi \|_{L^\infty(B_{\frac{1}{2}})}
&\leq C
\left(
\| \phi \|_{L^2(B_{\frac{5}{8}})} + \| \Delta \phi \|_{L^\infty(B_{\frac{5}{8}})}
\right) \notag \\
&\leq C
\left(
\| \phi \|_{L^2(B_{\frac{5}{8}})} + \| \Delta \phi \|_{L^2(B_{\frac{6}{8}})}
\right) \notag \\
&\leq C
\left(
\| \phi \|_{L^2(B_{\frac{5}{8}})} + \| \phi \|_{L^2(B_{\frac{7}{8}})}
\right) \notag \\
&\leq C
\| \phi \|_{L^2(B_1)} \notag \\
& \leq C
\| \phi \|_{\mathbf{L}^{p, \infty}(B_1)},\label{ineq:theta-4th-1}
\end{align}
where in the second inequality we used the fact $\Delta \phi$ is a harmonic function, in the third inequality Lemma \ref{lem:cacci}, and in the last inequality Theorem \ref{thm:L(p-,q)}.

At the same time, we have, for $n \geq 5$ and $p \in [\frac{n}{2},n]$,
\begin{align}\label{ineq:theta-4th-2}
\| \phi \|_{\mathbf{L}^{p, \infty}(B_\theta(x))}
\leq C \| \phi \|_{\mathbf{L}^{p, p}(B_\theta(x))}
\leq C \| \phi \|_{L^p(B_\theta(x))}
\leq C \theta^{\frac{n}{p}} \| \phi \|_{L^\infty(B_\theta(x))}.
\end{align}
For any $x \in B_{\frac{1}{4}}$ and $\theta \in (0,\frac{1}{4}$, we have $B_\theta(x) \subset B_{\frac{1}{2}}$, which, combing the inequality (\ref{ineq:theta-4th-1}) and (\ref{ineq:theta-4th-2}), yields that
\begin{align}
\| \phi \|_{\mathbf{L}^{p, \infty}(B_\theta(x))}
\leq C \theta^{\frac{n}{p}} \| \phi \|_{L^\infty(B_\theta(x))}
\leq C \theta \| \phi \|_{L^\infty(B_{\frac{1}{2}})}
\leq C \theta \| \phi \|_{\mathbf{L}^{p, \infty}(B_1)},
\end{align}
which is the desired conclusion.
\end{proof}

\medskip

\begin{lemma}\label{lem:4th-epsilon}
There exist $\epsilon_0>0$ and $\theta_0>0$ such that if $u \in W^{2,\frac{n}{2}}(B_1, \R^K)$ $(n\geq 5)$ is a weak solution to the equation (\ref{eqn:main2}) satisfying
\begin{align}
\|\nabla u\|_{L^n(B_1)} + \|\nabla^2 u\|_{L^{\frac{n}{2}}(B_1)} \leq \epsilon_0,
\end{align}
then we have
\begin{align}\label{ineq:4th-epsilon}
\|\nabla u \|_{\mathbf{L}^{n,\infty}(B_{\theta_0})}
+ \|\nabla^2 u \|_{\mathbf{L}^{\frac{n}{2},\infty}(B_{\theta_0})}
\leq \frac{1}{2}
\left(
\|\nabla u \|_{\mathbf{L}^{n,\infty}(B_1)}
+ \|\nabla^2 u \|_{\mathbf{L}^{\frac{n}{2},\infty}(B_1)}
\right).
\end{align}
\end{lemma}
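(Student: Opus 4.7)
\emph{Plan.} I would mirror the proof of Lemma \ref{lem:2nd-epsilon} line by line, with two systematic substitutions: replace the Newtonian kernel $\Phi$ by the biharmonic kernel $\Psi$ from \eqref{func:Green-4th}, and replace the single norm $\|\nabla u\|_{\mathbf{L}^{n,\infty}}$ by the combined norm $\|\nabla u\|_{\mathbf{L}^{n,\infty}} + \|\nabla^{2} u\|_{\mathbf{L}^{n/2,\infty}}$. Concretely, I would first extend $u$ (after subtracting an appropriate first-order polynomial on $B_{1}$) to $\widetilde u \in W^{2,n/2}(\R^{n},\R^{K})$ so that
\[
\|\nabla\widetilde u\|_{L^{n}(\R^{n})} + \|\nabla^{2}\widetilde u\|_{L^{n/2}(\R^{n})} \le C\epsilon_{0},
\]
with $\|\nabla\widetilde u\|_{\mathbf{L}^{n,\infty}(\R^{n})} + \|\nabla^{2}\widetilde u\|_{\mathbf{L}^{n/2,\infty}(\R^{n})}$ controlled by the corresponding Lorentz norms of $u$ on $B_{1}$. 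Then I would set
\[
v(x) = \int_{\R^{n}} \Psi(x-y)\,\widetilde Q_{1}(y)\,dy \;-\; \int_{\R^{n}} \nabla\Psi(x-y)\cdot \widetilde Q_{2}(y)\,dy,
\]
with $\widetilde Q_{i}$ denoting $Q_{i}$ evaluated at $(y,\widetilde u+L,\nabla\widetilde u,\nabla^{2}\widetilde u)$. A distributional integration by parts then gives $\Delta^{2} v = \widetilde Q_{1} + \nabla\cdot \widetilde Q_{2}$ on $\R^{n}$, so $\phi := u-v$ will be biharmonic on $B_{1}$, hence smooth there by interior elliptic regularity.

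\emph{Lorentz bound on $v$.} The heart of the proof will be the estimate
\[
\|\nabla v\|_{\mathbf{L}^{n,\infty}(\R^{n})} + \|\nabla^{2} v\|_{\mathbf{L}^{n/2,\infty}(\R^{n})} \le C\,\epsilon_{0}\Bigl(\|\nabla u\|_{\mathbf{L}^{n,\infty}(B_{1})} + \|\nabla^{2} u\|_{\mathbf{L}^{n/2,\infty}(B_{1})}\Bigr).
\]
Differentiating under the integral gives $\nabla v = \nabla\Psi*\widetilde Q_{1} - \nabla^{2}\Psi*\widetilde Q_{2}$ and $\nabla^{2} v = \nabla^{2}\Psi*\widetilde Q_{1} - \nabla^{3}\Psi*\widetilde Q_{2}$. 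Since $\nabla^{k}\Psi\in \mathbf{L}^{n/(n-4+k),\infty}(\R^{n})$ for $k=1,2,3$ by Proposition \ref{prop:L(p,infty)}, the Lorentz convolution inequality reduces the task to product estimates on each of $|\nabla u|^{4}$, $|\nabla u|^{2}|\nabla^{2}u|$, $|\nabla^{2}u|^{2}$ (from $Q_{1}$) and $|\nabla u||\nabla^{2}u|$ (from $Q_{2}$). On each product I would borrow the trick from the proof of Lemma \ref{lem:2nd-epsilon}: on exactly one factor, upgrade the weak norm to a strong $L^{p}\hookrightarrow \mathbf{L}^{p,p}$ norm (Theorem \ref{thm:L(p,q-)}) so as to pay a factor of $\epsilon_{0}$, while keeping the remaining factors in $\mathbf{L}^{n,\infty}$ or $\mathbf{L}^{n/2,\infty}$ via Theorem \ref{thm:L(p-,q)}. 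For instance,
\[
\||\nabla u|^{2}|\nabla^{2}u|\|_{\mathbf{L}^{n/4,q}} \le C\,\|\nabla u\|_{\mathbf{L}^{n,n}}^{2}\,\|\nabla^{2}u\|_{\mathbf{L}^{n/2,\infty}} \le C\,\epsilon_{0}^{2}\,\|\nabla^{2}u\|_{\mathbf{L}^{n/2,\infty}(B_{1})}
\]
for some finite $q$, and then convolution with $\nabla\Psi$ or $\nabla^{2}\Psi$ lands in $\mathbf{L}^{n,q'} \hookrightarrow \mathbf{L}^{n,\infty}$ or $\mathbf{L}^{n/2,q''}\hookrightarrow \mathbf{L}^{n/2,\infty}$ respectively. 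The other three nonlinearities follow by the same recipe, and the exponent arithmetic always closes because $\widetilde Q_{1}$ and $\nabla\widetilde Q_{2}$ are both $4$-homogeneous in $(\nabla u,\nabla^{2} u)$, matching the scaling order of $\nabla^{k}\Psi$.

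\emph{Conclusion and main obstacle.} To finish, I would apply Lemma \ref{lem:theta-4th} to the biharmonic $\phi = u-v$ on $B_{1}$ and combine with the triangle inequality to obtain, for every $\theta \in (0,1/4)$,
\[
\|\nabla u\|_{\mathbf{L}^{n,\infty}(B_{\theta})} + \|\nabla^{2}u\|_{\mathbf{L}^{n/2,\infty}(B_{\theta})} \le (C\theta + C\epsilon_{0})\Bigl(\|\nabla u\|_{\mathbf{L}^{n,\infty}(B_{1})} + \|\nabla^{2}u\|_{\mathbf{L}^{n/2,\infty}(B_{1})}\Bigr),
\]
then choose $\theta_{0}$ and $\epsilon_{0}$ small so that $C\theta_{0}+C\epsilon_{0}\le 1/2$, giving (\ref{ineq:4th-epsilon}). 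The genuinely delicate point will be the bookkeeping in the previous paragraph: four distinct nonlinearities must be placed simultaneously into two different Lorentz targets, and for each term one has to select correctly which factor absorbs the $\epsilon_{0}$ so that the remaining factors still satisfy the exponent constraints in Theorems \ref{thm:L(p,q-)} and \ref{thm:L(p-,q)}, as well as in the Lorentz convolution inequality against the appropriate $\nabla^{k}\Psi$.
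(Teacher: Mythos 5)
Your proposal is correct and follows essentially the same route as the paper's proof: extend $u$, form the Newtonian-potential correction via the biharmonic kernel, estimate its $\mathbf{L}^{n,\infty}$ and $\mathbf{L}^{n/2,\infty}$ norms through the O'Neil convolution inequality plus Lorentz--H\"older, and then combine with Lemma \ref{lem:theta-4th} on the biharmonic remainder. The only (cosmetic) deviations are that the paper splits the potential into two pieces $v_1,v_2$ for $Q_1$ and $\nabla\cdot Q_2$ rather than a single $v$, absorbs the cross term $|\nabla u|^2|\nabla^2 u|\le C(|\nabla u|^4+|\nabla^2 u|^2)$ before applying H\"older instead of estimating all four nonlinearities separately, and extracts the factor $\epsilon_0$ directly from the established bound $\|\nabla\widetilde u\|_{\mathbf{L}^{n,\infty}}+\|\nabla^2\widetilde u\|_{\mathbf{L}^{n/2,\infty}}\le C\epsilon_0$ rather than through your $\mathbf{L}^{p,p}$ upgrade on one factor.
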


\begin{proof}
For simplicity, we always denote by $C$ the positive constant independent of $u$.\\
For $u \in W^{2,\frac{n}{2}}(B_1, \R^K)$, there exists a extension $\widetilde{u} \in  W^{2,\frac{n}{2}}(\R^n, \R^K)$ such that
\begin{align}
\widetilde{u}|_{B_1}=u|_{B_1}-L
\end{align}
where $L=\frac{1}{|B_1|}\int_{B_1}u(x)dx$, and
\begin{align*}
\| \nabla \widetilde{u}\|_{L^n(\R^n)}
&\leq C \|\nabla u \|_{L^n(B_1)} \leq C \epsilon_0, \\
\| \nabla \widetilde{u}\|_{\mathbf{L}^{n,\infty}(\R^n)}
&\leq C \|\nabla u \|_{\mathbf{L}^{n,\infty}(B_1)}\leq C \epsilon_0, \\
\| \nabla^2 \widetilde{u}\|_{L^{\frac{n}{2}}(\R^n)}
&\leq C
\left(
\|\nabla u \|_{L^n(B_1)}+ \|\nabla^2 u \|_{L^{\frac{n}{2}}(B_1)}
\right)
\leq C \epsilon_0, \\
\| \nabla^2 \widetilde{u}\|_{\mathbf{L}^{\frac{n}{2},\infty}(\R^n)}
&\leq C
\left(
\|\nabla u \|_{\mathbf{L}^{n,\infty}(B_1)} +\|\nabla^2 u \|_{\mathbf{L}^{\frac{n}{2},\infty}(B_1)}
\right)
\leq C \epsilon_0.
\end{align*}
Note that we will use the following fact:
\begin{align*}
\Big|Q_1(x, u, \nabla u, \nabla^2 u) \Big| \leq C
\left(
|\nabla u|^4 + |\nabla u|^2 |\nabla^2 u| + |\nabla^2 u|^2
\right)
\leq C
\left(
|\nabla u|^4 + |\nabla^2 u|^2
\right).
\end{align*}
\\
Define two functions
\begin{align}
v_1(x) &= -\int_{\R^n} \Psi(x-y) Q_1(y,\widetilde{u}(y)+L,\nabla \widetilde{u}(y), \nabla^2 \widetilde{u}(y)) dy  \\
v_2(x) &= \int_{\R^n} \nabla \Psi(x-y) Q_2(y,\widetilde{u}(y)+L,\nabla \widetilde{u}(y), \nabla^2 \widetilde{u}(y)) dy
\end{align}
where $\Psi(x)$ is the fundamental function defined by (\ref{func:Green-4th}) for $n \geq 5$. Then we have that
\begin{align}
& \| \nabla v_1\|_{\mathbf{L}^{n, \infty}(\R^n)}
+ \| \nabla^2 v_1\|_{\mathbf{L}^{\frac{n}{2}, \infty}(\R^n)} \notag \\
\leq &
\left\|\int_{\R^n}  \nabla \Psi(x-y) Q_1(y) dy  \right\|_{\mathbf{L}^{n, \infty}(\R^n)}
+ \left\|\int_{\R^n} \nabla^2 \Psi(x-y) Q_1(y) dy \right\|_{\mathbf{L}^{\frac{n}{2}, \infty}(\R^n)} \notag \\
\leq & C
\left(
\| \nabla \Psi \|_{\mathbf{L}^{\frac{n}{n-3}, \infty}(\R^n)}
+ \| \nabla^2 \Psi \|_{\mathbf{L}^{\frac{n}{n-2}, \infty}(\R^n)}
\right)
\| |\nabla \widetilde{u}|^4 + |\nabla^2 \widetilde{u}|^2\|_{\mathbf{L}^{\frac{n}{4}, \infty}(\R^n)} \notag \\
\leq & C
\left(
\| \nabla \widetilde{u}\|^4_{\mathbf{L}^{n, \infty}(\R^n)}
+ \|\nabla^2 \widetilde{u}\|^2_{\mathbf{L}^{\frac{n}{2}, \infty}(\R^n)}
\right) \notag \\
\leq & C \epsilon_0
\left(
\| \nabla u\|_{\mathbf{L}^{n, \infty}(B_1)}
+ \|\nabla^2 u \|_{\mathbf{L}^{\frac{n}{2}, \infty}(B_1)}
\right),\label{ineq:4th-epsilon-1}
\end{align}
and
\begin{align}\label{ineq:4th-epsilon-2}
& \| \nabla v_2\|_{\mathbf{L}^{n, \infty}(\R^n)}
+ \| \nabla^2 v_2\|_{\mathbf{L}^{\frac{n}{2}, \infty}(\R^n)} \notag \\
\leq &
\left\|\int_{\R^n}  \nabla^2 \Psi(x-y) Q_2(y) dy \right\|_{\mathbf{L}^{n, \infty}(\R^n)}
+\left\|\int_{\R^n} \nabla^3 \Psi(x-y) Q_2(y) dy \right\|_{\mathbf{L}^{\frac{n}{2}, \infty}(\R^n)} \notag \\
\leq & C
\left(
\| \nabla^2 \Psi \|_{\mathbf{L}^{\frac{n}{n-2}, \infty}(\R^n)}
+ \| \nabla^3 \Psi \|_{\mathbf{L}^{\frac{n}{n-1}, \infty}(\R^n)}
\right)
\| |\nabla \widetilde{u}| \cdot |\nabla^2 \widetilde{u}|\|_{\mathbf{L}^{\frac{n}{3}, \infty}(\R^n)} \notag \\
\leq & C
\left(
\| \nabla \widetilde{u}\|_{\mathbf{L}^{n, \infty}(\R^n)}
\cdot \|\nabla^2 \widetilde{u}\|_{\mathbf{L}^{\frac{n}{2}, \infty}(\R^n)}
\right) \notag \\
\leq & C \epsilon_0
\| \nabla u\|_{\mathbf{L}^{n, \infty}(B_1)}.
\end{align}
Moreover, since
\begin{align}
\Delta^2 (u-v_1-v_2) = 0, \quad \mbox{in $B_{1}$},
\end{align}
by Lemma \ref{lem:theta-4th} and inequalities (\ref{ineq:4th-epsilon-1}) and (\ref{ineq:4th-epsilon-2}), we have, for all $\theta \in (0,\frac{1}{4})$,
\begin{align}
& \| \nabla(u-v_1-v_2)\|_{\mathbf{L}^{n, \infty}(B_{\theta})}
+ \| \nabla^2 (u-v_1-v_2)\|_{\mathbf{L}^{\frac{n}{2}, \infty}(B_{\theta})} \notag \\
\leq & C \theta
\left(
\| \nabla(u-v_1-v_2)\|_{\mathbf{L}^{n, \infty}(B_{1})}
+ \| \nabla^2 (u-v_1-v_2)\|_{\mathbf{L}^{\frac{n}{2}, \infty}(B_{1})}
\right) \notag \\
\leq & C \theta
\big(
\| \nabla u\|_{\mathbf{L}^{n, \infty}(B_{1})}
+ \| \nabla v_1\|_{\mathbf{L}^{n, \infty}(B_{1})}
+\| \nabla v_2\|_{\mathbf{L}^{n, \infty}(B_{1})}  \notag \\
&+ \| \nabla^2 u\|_{\mathbf{L}^{\frac{n}{2}, \infty}(B_{1})}
+ \| \nabla^2 v_1\|_{\mathbf{L}^{\frac{n}{2}, \infty}(B_{1})}
+\| \nabla^2 v_2\|_{\mathbf{L}^{\frac{n}{2}, \infty}(B_{1})}
\big) \notag \\
\leq & C \theta
\left(
\| \nabla u\|_{\mathbf{L}^{n, \infty}(B_{1})}
+  \| \nabla^2 u\|_{\mathbf{L}^{\frac{n}{2}, \infty}(B_{1})}
\right).
\end{align}
It follows that
\begin{align*}
& \| \nabla u\|_{\mathbf{L}^{n, \infty}(B_{\theta})}
+ \| \nabla^2 u\|_{\mathbf{L}^{\frac{n}{2}, \infty}(B_{\theta})} \\
\leq &
\| \nabla(u-v_1-v_2)\|_{\mathbf{L}^{n, \infty}(B_{\theta})}
+ \| \nabla^2 (u-v_1-v_2)\|_{\mathbf{L}^{\frac{n}{2}, \infty}(B_{\theta})} \\
& + \| \nabla v_1 \|_{\mathbf{L}^{n, \infty}(B_{\theta})}
+ \| \nabla v_2 \|_{\mathbf{L}^{n, \infty}(B_{\theta})}
+ \| \nabla^2 v_1\|_{\mathbf{L}^{\frac{n}{2}, \infty}(B_{\theta})}
+ \| \nabla^2 v_2\|_{\mathbf{L}^{\frac{n}{2}, \infty}(B_{\theta})} \\
\leq & C \theta
\left(
\| \nabla u\|_{\mathbf{L}^{n, \infty}(B_{1})}
+  \| \nabla^2 u\|_{\mathbf{L}^{\frac{n}{2}, \infty}(B_{1})}
\right)
+ C \epsilon_0
\left(
\| \nabla u\|_{\mathbf{L}^{n, \infty}(B_{1})}
+  \| \nabla^2 u\|_{\mathbf{L}^{\frac{n}{2}, \infty}(B_{1})}
\right).
\end{align*}
Hence, we can choose $\theta=\theta_0 \in (0,\frac{1}{4})$ and $\epsilon_0>0$ small enough such that inequality (\ref{ineq:4th-epsilon}) holds.
\end{proof}

The rest proof of Theorem \ref{thm:main2} is exactly similar as that of Theorem \ref{thm:main1}. Thus we omit it. We would like to refer the reader to \cite{CWY} for the higher order regularity from the H\"older continuity estimate.

\section{The Counterexample}\label{sec:cexample}
\subsection{The singular solution to the system (\ref{eqn:main2}) for \texorpdfstring{$n=4$}{n=4}}\label{sec:example1}

In this subsection, we plan to prove Theorem \ref{thm:main3}. To be more exact, we will give a detailed calculation to show that there exists a singular map $u \in W^{2,2}$ for $n=4$, first introduced by Frehse \cite{Frehse}, satisfying the equation (\ref{eqn:main2}) in the sense of distribution.

\medskip

The map $u=(u_1, u_2): \Omega \rightarrow \Sp^1 \hookrightarrow \R^2$ is defined by
\begin{align}\label{eg:4th}
u_1(x)=\sin (\log (\log |x|^{-1})), \quad u_2(x)=\cos (\log (\log |x|^{-1}))
\end{align}
where $\Omega=\{x=(x^1,x^2,x^3,x^4) \in \R^4 : \, |x| \leq \exp(-2) \}$. Of course, $u$ lies in $L^\infty(\Omega)$ but is singular at point $x=0$.

\medskip

First, Let us check that $u\in W^{2,2}(\Omega)$. For simplicity, Let
$$f(x)=\log |x|^{-1},\quad x \in \R^4,\quad 0<|x|\leq \exp(-2).$$
By a simple calculation, we have, for $i, j=1,2,3,4$,
\begin{align*}
\frac{\partial u_1}{\partial x^i} &= \frac{f_i}{f}\cos \log f, \qquad
\frac{\partial u_2}{\partial x^i} = -\frac{f_i}{f} \sin \log f, \\
\frac{\partial^2 u_1}{\partial x^i \partial x^j} &= - \frac{f_i f_j}{f^2} \sin \log f
+ \frac{f_{ij} f - f_i f_j}{f^2} \cos \log f, \\
\frac{\partial^2 u_2}{\partial x^i \partial x^j} &= - \frac{f_i f_j}{f^2} \cos \log f
- \frac{f_{ij} f - f_i f_j}{f^2} \sin \log f,
\end{align*}
where
\begin{align*}
f_i=\frac{\partial f}{\partial x^i}=- \frac{x^i}{|x|^2}, \qquad
f_{ij}= \frac{\partial^2 f}{\partial x^i \partial x^j}= -\frac{\delta_{ij}}{|x|^2}+ \frac{2 x^i x^j}{|x|^4}.
\end{align*}
It follows that
\begin{align}
|\nabla u|^2  &= \frac{|\nabla f|^2}{f^2}=\frac{1}{|x|^2 f^2}, \\
|\nabla^2 u|^2 & \leq C \left( \frac{|\nabla f|^4}{f^4} + \frac{|\nabla^2 f|^2}{f^2}\right)
\leq C \left( \frac{1}{|x|^4 f^4} + \frac{1}{|x|^4 f^2} \right),
\end{align}
where $C$ is a positive constant independent of $u$. By direct computation, we obtain
\begin{eqnarray}
\frac{1}{|x|^2 f^2}, \frac{1}{|x|^4 f^4}, \frac{1}{|x|^4 f^2} \in L^1(\Omega),
\end{eqnarray}
which implies $ u \in W^{2,2}(\Omega)$ by Fubini's theorem.

\medskip

We are now in the position to show $u$ is a weak solution to the following system
\begin{align}
\Delta^2 u_1 &=\left( R_1^2+R_2^2 \right) \frac{2(u_1+u_2)}{1+|u|^2}
- \frac{20 u_1}{1+|u|^2}|\nabla u|^4 ,\label{eqn:eg-4th-1}\\
\Delta^2 u_2 &=\left( R_1^2+R_2^2 \right) \frac{2(u_2-u_1)}{1+|u|^2}
- \frac{20 u_2}{1+|u|^2}|\nabla u|^4, \label{eqn:eg-4th-2}
\end{align}
where
\begin{align}
R_1 &= \Delta u_1 + \frac{2(u_1+u_2)}{1+|u|^2} |\nabla u|^2 ,\\
R_2 &= \Delta u_2 + \frac{2(u_2-u_1)}{1+|u|^2} |\nabla u|^2 .
\end{align}
Since $u=(u_1,u_2)$ is smooth on $\Omega \setminus \{0 \}$, one can check that $u$ satisfies the system (\ref{eqn:eg-4th-1})-(\ref{eqn:eg-4th-2}) in the classic sense in the domain $\Omega \setminus \{0 \}$. Moreover, it follows from $u \in L^\infty(\Omega) \cap W^{2,2}(\Omega) \cap C^\infty(\Omega \setminus \{0 \})$ and Fubini's theorem that $u$ is just a weak solution of the system (\ref{eqn:eg-4th-1})-(\ref{eqn:eg-4th-2}).

For the convenience of  reader we give the details.
Since $u \in C^\infty(\Omega \setminus \{0 \})$ and satisfies the system (\ref{eqn:eg-4th-1})-(\ref{eqn:eg-4th-2}) in classic sense in $\Omega \setminus \{0 \}$, it suffices to show the following equality holds for all $\varphi \in C^\infty_0(\Omega, \R^2)$
\begin{align}
\int_{\Omega}\Delta u(x) \Delta \varphi(x)=\int_{\Omega} \Delta^2 u(x) \varphi(x) dx.
\end{align}
In fact, since $\Delta u \in W^{2,2}(\Omega, \R^2)$, we have, for all $\varphi \in C^\infty_0(\Omega, \R^2)$,
\begin{align*}
\int_{\Omega} \Delta u \cdot \Delta \varphi dx
&=\int_{\R^4} \Delta u \cdot \Delta \varphi dx \\
&=\int_{\R} dx^4 \int_{\R^3} \Delta u \cdot (\partial_1^2 +\partial_2^2+ \partial_3^2)\varphi dx'
+ \int_{\R} dx^4 \int_{\R^3} \Delta u \cdot \partial_4^2 \varphi dx' \\
&=\int_{\R} dx^4 \int_{\R^3} (\partial_1^2 +\partial_2^2+ \partial_3^2)\Delta u \cdot \varphi dx'
+ \int_{\R} dx^4 \int_{\R^3} \Delta u \cdot \partial_4^2 \varphi dx' \\
&=\int_{\R} dx^4 \int_{\R^3} (\partial_1^2 +\partial_2^2+ \partial_3^2)\Delta u \cdot \varphi dx'
+  \int_{\R^3} dx' \int_{\R} \Delta u \cdot \partial_4^2 \varphi dx^4  \\
&=\int_{\R} dx^4 \int_{\R^3} (\partial_1^2 +\partial_2^2+ \partial_3^2)\Delta u \cdot \varphi dx'
+  \int_{\R^3} dx' \int_{\R} \partial_4^2 \Delta u \cdot  \varphi dx^4  \\
& = \int_{\R^4} \Delta^2 u \cdot \varphi dx,
\end{align*}
where $x=(x^1, x^2, x^3, x^4)=(x', x^4)$. In above second equality and fourth equality we used Fubini's theorem, and in the third and fifth equality integration by parts for $x^4 \neq 0$ and $x' \neq 0$ respectively.

\medskip

Finally, one can easily show that the right hand side of the equation (\ref{eqn:eg-4th-1}) and (\ref{eqn:eg-4th-2}) satisfy the condition (\ref{cond:main2-1}). In conclusion, the map $u$, defined by (\ref{eg:4th}), is just what we want.

\subsection{The singular solution to the system (\ref{eqn:main1}) for
\texorpdfstring{$n \geq 3$}{n>=3}}\label{sec:example2}
In this subsection, we give a singular map $u$ which lies in $W^{1,p}$ for all $p \in [2, n)$ and satisfies the system \eqref{eqn:main1} in the distribution sense.

The map $u=(u_1, u_2): \Omega \rightarrow \Sp^1 \hookrightarrow \R^2$ is defined by
\begin{align}%\label{eg:4th}
u_1(x)=\sin \Big((2-n)\log |x| \Big), \quad u_2(x)=\cos \Big((2-n)\log |x| \Big)
\end{align}
where $\Omega=\{x\in \R^n : \, |x| \leq \exp(-2) \}$ ($n \geq 3$).

Following the same procedure of Sec.\ref{sec:example1}, one can check that $u\in W^{1,p}(\Omega) \cap L^\infty (\Omega)$ for any $p \in [2, n)$ solves the following system in the weakly sense
\begin{align}
\Delta u_1 &= -  \frac{2(u_1+u_2)}{1+|u|^2}|\nabla u|^2,  \\
\Delta u_2 &= - \frac{2(u_2-u_1)}{1+|u|^2}|\nabla u|^2.
\end{align}

\subsection{The singular solution to the system (\ref{eqn:main2}) for
\texorpdfstring{$n \geq 5$}{n >=5}}\label{sec:example3}
In this subsection, we give a singular map $u$ which lies in $W^{2,p}$ for all $p \in [2, n/2)$ and satisfies the system \eqref{eqn:main2} in the distribution sense.

The map $u=(u_1, u_2): \Omega \rightarrow \Sp^1 \hookrightarrow \R^2$ is defined by
\begin{align}%\label{eg:4th}
u_1(x)=\sin \Big((4-n)\log |x| \Big), \quad u_2(x)=\cos \Big((4-n)\log |x| \Big)
\end{align}
where $\Omega=\{x\in \R^n : \, |x| \leq \exp(-2) \}$ ($n \geq 5$).

Following the same procedure of Sec.\ref{sec:example1}, one can check that $u\in W^{2,p}(\Omega) \cap L^\infty (\Omega)$ for all $p \in [2, n/2)$ solves the following system in the weakly sense
\begin{align}
\Delta^2 u_1 &=\left( R_1^2+R_2^2 \right) \frac{2(u_1+u_2)}{1+|u|^2}
+ \frac{4 u_2}{1+|u|^2}|\nabla u|^4 + (R_2-R_1) |\nabla u|^2,\\%\label{eqn:eg-4th-1}\\
\Delta^2 u_2 &=\left( R_1^2+R_2^2 \right) \frac{2(u_2-u_1)}{1+|u|^2}
- \frac{4 u_1}{1+|u|^2}|\nabla u|^4 - (R_1+R_2) |\nabla u|^2, %\label{eqn:eg-4th-2}
\end{align}
where
\begin{align}
R_1 &= \Delta u_1 + \frac{2(u_1+u_2)}{1+|u|^2} |\nabla u|^2 ,\\
R_2 &= \Delta u_2 + \frac{2(u_2-u_1)}{1+|u|^2} |\nabla u|^2 .
\end{align}

\section{Discussions and further problems}\label{sec:discussions}

We use Lorentz spaces and Morrey's result to prove H\"older continuity. Consider the following Morrey's subnorm for $n\geq 3$, $2\leq p\leq n$,
\[
M_p(x_0, r)(u)=r^{p-n}\int_{B_{r}(x_0)} |\nabla u|^p dx
\]
A simple application of H\"older inequality implies that
\[
M_p(x_0, r)(u)\leq C_n\|\nabla u\|_{L^n(B_r(x_0))}^p.
\]
We can ask the following,

\begin{problem}\label{p1}Suppose $u\in W^{1, p}(B_1, \mathbb{R}^k)$ is a weak solution of \eqref{eqn:main1} for $B_1\subset \mathbb{R}^n$. If we assume further that for any $x_0\in B_1$, and $r<\text{dist}(x_0, \partial B_1)$,
\[
\lim_{r\rightarrow 0}M_p(x_0, r)(u)=0
\]
Is $u$ H\"older continuous in $B_{1/2}$ for $p \in (2, n]$? Certainly we assume $n\geq 3$.

\end{problem}
We use the Laplacian operator in the system for simplicity. In general one can replace $\Delta u$ by $a^{ij}D^2_{ij}u$, given that $a^{ij}D^2_{ij}$ defines a smooth uniformly elliptic operator.
We ask the following,

\begin{problem}\label{p2}
Consider the following system
\[
D_i(a^{ij}D_j u)=Q(x, u, \nabla u)
\]
such that $a^{ij}\in W^{1, n}\cap L^\infty$ defines a uniformly elliptic operator and $Q$ has quadratic gradient growth. Is a $W^{1, n}$ weak solution necessarily H\"older continuous (for $n\geq 3$ of course)?

\end{problem}
We believe the answer to both problems should be affirmative by similar observations; while it would be  interesting to study Problem \ref{p1} for $p=2$. Theorem \ref{eqn:main1} corresponds to Problem \ref{p1} when $p=n$ and it seems that similar method could work for $p\in (2, n]$. We shall consider these problems elsewhere.

\medskip

\textbf{Acknowledgement.} The research of Weiyong He was partially supported by NSF grant 1611797. The research of Ruiqi Jiang was supported by a grant from the Fundamental Research Funds for the Central Universities.

\end{document}